\def\bfa{\mathbf a}
\def\bfb{\mathbf b}
\newcommand{\bC}{{\mathbb C}}
\newcommand{\bN}{{\mathbb N}}
\newcommand{\bP}{{\mathbb P}}
\newcommand{\cE}{{\mathcal E}}
\newcommand{\cF}{{\mathcal F}}
\newcommand{\cK}{{\mathcal K}}
\newcommand{\cI}{{\mathcal I}}
\newcommand{\cO}{{\mathcal O}}
\newcommand{\rank}{{\rm rank}}
\newcommand{\codim}{\hbox{\rm codim}\,}
\newcommand{\ra}{\rightarrow}
\def\Om{\Omega}
\def\Lam{\Lambda}
\def\lra{\longrightarrow}
\def\ol{\overline}
\def\me{\medskip}
\def\ni{\noindent}
\def\Tor{{\rm Tor}}
\def\coker{{\rm coker}}
\theoremstyle{plain}
\newtheorem{thm}{Theorem}[section]
\newtheorem{cor}[thm]{Corollary}
\newtheorem{lem}[thm]{Lemma}
\newtheorem{prop}[thm]{Proposition}
\newtheorem{que}[thm]{Question}
\theoremstyle{definition}
\newtheorem{rem}[thm]{Remark}
\newtheorem{example}[thm]{Example}
\newtheorem{ex}[thm]{Example}
\title[Complements and higher resonance varieties of hyperplane arrangements]{Complements and higher resonance varieties of hyperplane arrangements}
\author{Nero Budur}
\address{Department of Mathematics,
University of Notre Dame, 255 Hurley Hall, IN 46556, USA} \email{nbudur@nd.edu}
\keywords{hyperplane arrangements; connected matroids; Betti numbers; chromatic and characteristic polynomials; resonance varieties.}
\subjclass[2010]{32S22; 52C35.}
\thanks {This work was partially supported by NSA grant H98230-11-1-0169.}
\begin{document}

\begin{abstract} Hyperplane arrangements form the geometric counterpart of combinatorial objects such as matroids. The shape of the sequence of Betti numbers of the complement of a hyperplane arrangement is of particular interest in combinatorics, where they are known, up to a sign, as  Whitney numbers of the first kind, and appear as the coefficients of chromatic, or characteristic, polynomials. We show that certain combinations, some nonlinear, of these Betti numbers satisfy Schur positivity. At the same time, we study the higher degree resonance varieties of the arrangement. 
We draw some consequences, using homological algebra results and vector bundles techniques, of the fact that all resonance varieties are determinantal. 
\end{abstract}

\maketitle

\section{Introduction}

Hyperplane arrangements form the geometric counterpart of combinatorial objects such as matroids. The shape of the sequence of Betti numbers of the complement of a hyperplane arrangement is of particular interest in combinatorics, where they are known, up to a sign, as Whitney numbers of the first kind, and appear as the coefficients of chromatic, or characteristic, polynomials.  Using this equivalent terminology, lower bounds  have been determined by  Dowling-Wilson \cite{DW} and improved for connected matroids by Brylawski \cite{Br}. Recently, using singularity theory, Huh \cite{H} proved that the sequence of Betti numbers is log concave.  A more general question is what other polynomials in the Betti numbers satisfy positivity. We show that certain combinations, some nonlinear, of Betti numbers satisfy Schur positivity,  see Theorem \ref{thmMain2}.

\me
At the same time, we study the higher degree resonance varieties $R^{\,i}_j$ of the arrangement. These combinatorial invariants were first introduced by Falk \cite{Fa} and are related to the cohomology of local systems on the complement \cite{L-first} and, conjecturally, to the lower central series and Chen ranks of the fundamental group of the complement \cite{Su}. Resonance varieties are also connected with the critical points of master functions \cite{CDFV-1,CDFV-2}, with the Bethe ansatz equations for Gaudin models of complex simple Lie algebras \cite{Var-Bethe}, and are  key objects in the conjectured combinatorial invariance of characteristic varieties and of Milnor fiber cohomology of hyperplane arrangements, e.g. \cite{Li-comb}.  The varieties $R^{1}_j$ have been studied in detail by many people, e.g. Libgober-Yuzvinsky \cite{LY}, Falk-Yuzvinsky \cite{FY}, etc. In contrast, our knowledge of the higher degree $R^{\,i}_j$ is very limited. We  derive from the fact that all $R^{\,i}_j$ are determinantal  some  results complementing the existing ones on resonance varieties.  Thus the main contribution of this note is to point out the usefulness to the, already diverse, hyperplane arrangement theory of some homological algebra and vector bundles results. That $R^{\,i}_1$ admit equations in terms of minors of matrices is also contained in Denham-Schenck \cite{DS}, an unpublished preprint that was brought to our attention by the authors. See also {\it loc. cit.} for an interpretation of $R^{\,i}_1$ in terms of Ext modules and for the role played by the double Ext spectral sequence.

\me
 To present the results, let $D$ be a hyperplane arrangement of degree $d$ in $\bC^n$. We assume that $D$ is central, essential, and indecomposable; see \ref{subsDefs} for definitions. Let $U=\bP^{n-1}-\bP(D)$. We denote by $H^i(U)$ the complex cohomology group $H^i(U,\bC)$.  The cohomology ring of $U$ is a combinatorial invariant of the hyperplane arrangement $D$ \cite{OS}. Let
\begin{align*}
b_i:&=b_i(U)=\dim_\bC H^i(U) \\
\beta_i:&= b_{i}-b_{i-1}+\ldots + (-1)^{i}b_0 .
\end{align*}
Thus $\beta_{n-1}=\chi(U)$ is the Crapo invariant of the matroid of $D$, and $\beta_i$ is the Crapo invariant of a truncation this matroid. Corollary \ref{corBrbds} gives lower bounds on $b_i$ and $\beta_i$ following \cite{DW, Br}. In Proposition \ref{propNewBounds}, although we show bounds in general weaker than the ones of Corollary \ref{corBrbds} as long as $d$ is not too small compared with $n$, we derive them from general algebraic results.

\me
Let $\bP=\bP(H^1(U))$. By \cite{EPY}, we have a linear  locally free resolution
\begin{align}\label{eqF}
0\lra \cO_\bP(-n+1)\otimes H^0(U)  & \mathop{\lra}^{\phi_0} \cO_\bP(-n+2)  \otimes H^1(U)\mathop{\lra}^{\phi_1}\ldots  \\
\notag\ & \ldots\mathop{\lra}^{\phi_{n-2}\ \ \ } \cO_\bP\otimes H^{n-1}(U)\lra \cF\lra 0 \, .
\end{align}
Here $\cF$ is the sheaf version of the {\it singular module} of the arrangement as defined by Eisenbud-Popescu-Yuzvinksy using the BGG-correspondence \cite{EPY}.

\me
An element $v\in H^1(U)$ defines a complex $(H^\bullet (U), v\cup .)$ via the cup product. Define the {\it resonance varieties} of $U$ to be
$$
R_j^{\,i}(U):=\{ v\in H^1(U)\mid \dim H^i(H^\bullet (U),v\cup .)\ge j \}.
$$
The resonance varieties of $U$ are combinatorial invariants of the hyperplane arrangement $D$. We will use the notation $R^{i}(U)$ for $R^{\,i}_1(U).$ We have $R^{i}(U)=R^{\,i}_1(U)\supset R^{\,i}_2(U)\supset R^{\,i}_3(U)\supset\ldots $. 
It is known that $R^i(U)$ are union of vector subspaces of $H^1(U)$, see \cite{CS, CO, EPY}. 

\me  Define the {\it Fitting ideal} $\cI_{k}(\phi_i)$  to be the ideal generated by the $k$-minors of the matrix of linear forms representing $\phi_i$ in (\ref{eqF}). Matei-Suciu \cite{MS} have shown that $R^1_j(U)$ admit equations in terms of minors of the linearized Alexander matrix, which is the same as $\phi_1$. This can be generalized, see also Denham-Schenck \cite{DS}-Proposition 2.9 for $R^{\,i}_1(U)$.

\begin{thm}\label{propRefRes}
$\bP(R^{\,i}_j(U))$ is the support of the  ideal $\cI_{\beta_i+1-j}(\phi_i)$.
\end{thm}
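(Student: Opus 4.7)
The plan is to identify the fiber of the complex (\ref{eqF}) at a point $[v] \in \bP$ with the Aomoto complex $(H^\bullet(U), v \cup \cdot)$, and then translate the cohomology jump condition into a rank-drop condition on a single differential. By the EPY construction via the BGG correspondence, after trivializing each $\cO_\bP(-n+1+i)$ at $[v]$ the linear map $\phi_i|_v$ is canonically the cup-product operator $v \cup \cdot \colon H^i(U) \to H^{i+1}(U)$. Consequently the $i$-th cohomology of the fiber complex equals $H^i(H^\bullet(U), v \cup \cdot)$, and by definition the support of $\cI_{\beta_i+1-j}(\phi_i)$ is the locus $\{[v] \in \bP : \rank \phi_i|_v \leq \beta_i - j\}$.

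Next I would apply the fiberwise rank-nullity computation $\dim H^i(H^\bullet(U), v \cup \cdot) = b_i - \rank \phi_i|_v - \rank \phi_{i-1}|_v$ and rewrite it, using the identity $b_i = \beta_i + \beta_{i-1}$ that is immediate from the definition of $\beta_i$, as
\[
\dim H^i(H^\bullet(U), v \cup \cdot) = (\beta_i - \rank \phi_i|_v) + (\beta_{i-1} - \rank \phi_{i-1}|_v).
\]
At a generic $[v]$ the complex (\ref{eqF}) is exact except in top degree where $\cF$ is locally free, so inductively $\rank \phi_k|_v = \beta_k$ generically; upper semicontinuity of rank then forces $\rank \phi_k|_v \leq \beta_k$ for every $[v] \in \bP$, so both summands above are non-negative. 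The inclusion $V(\cI_{\beta_i+1-j}(\phi_i)) \subseteq \bP(R^{\,i}_j(U))$ is then immediate.

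The reverse inclusion I would establish by induction on $i$. The base case $i=1$ recovers Matei-Suciu: since $\rank \phi_0|_v = 1 = \beta_0$ for every $[v] \in \bP$, the second summand vanishes identically and $\dim H^1(H^\bullet(U), v\cup\cdot) = \beta_1 - \rank \phi_1|_v$ exactly. For the inductive step I would show that on $\bP(R^{\,i}_j(U))$ any rank drop of $\phi_{i-1}$ is accompanied by a rank drop of $\phi_i$ of at least the same magnitude, i.e., $\beta_{i-1} - \rank \phi_{i-1}|_v \leq \beta_i - \rank \phi_i|_v$, which immediately upgrades the formula above into the desired determinantal description. I expect to extract this propagation from the minimality of (\ref{eqF}), the Koszul property of the Orlik-Solomon algebra, and the reflexivity of the EPY singular module $\cF$; on the exterior-algebra side of BGG, rank defects propagate along the linear strand of the associated Tate resolution.

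The principal obstacle is precisely this rank-propagation statement. It cannot be deduced formally from $\phi_i\phi_{i-1} = 0$ together with semicontinuity, as abstract two-step complexes of sheaves can easily violate it. The argument must therefore genuinely exploit the Orlik-Solomon origin of the resolution, rather than appeal only to general properties of minimal linear complexes; once the propagation is in hand, the set-theoretic equality $\{a + b \geq j\} = \{a \geq j\}$ (with $a = \beta_i - \rank \phi_i|_v$ and $b = \beta_{i-1} - \rank \phi_{i-1}|_v$) becomes tautological and the theorem follows.
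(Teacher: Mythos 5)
Your setup and your first inclusion are correct and coincide with what the paper does: the fiberwise identification of (\ref{eqF}) with the Aomoto complex is exactly Proposition \ref{propVary}, and the rank--nullity identity
\[
\dim H^i(H^\bullet(U),v\cup .)\;=\;\bigl(\beta_i-\rank \phi_i|_v\bigr)+\bigl(\beta_{i-1}-\rank \phi_{i-1}|_v\bigr)\;=:\;a(v)+b(v),
\]
with $a,b\ge 0$ by semicontinuity, immediately gives $\mathrm{supp}\,\cI_{\beta_i+1-j}(\phi_i)=\{a\ge j\}\subseteq\{a+b\ge j\}=\bP(R^{\,i}_j(U))$. The reverse inclusion is where your proposal breaks down, in two distinct ways. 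First, the rank-propagation statement you isolate is asserted, not proved: listing possible ingredients (minimality, Koszulness, reflexivity, the Tate resolution) is not an argument, and you say so yourself. Second, and more seriously, even if you had $b(v)\le a(v)$ the theorem would not follow: from $a+b\ge j$ and $b\le a$ you only get $a\ge j/2$, so the ``tautological'' equality $\{a+b\ge j\}=\{a\ge j\}$ you invoke at the end is false as stated. What the theorem actually requires is $b(v)=0$ at every point where it is tested, i.e.\ that $\phi_{i-1}$ attains its generic rank $\beta_{i-1}$ there; since $b(v)>0$ precisely on the rank-drop locus of $\phi_{i-1}$, which meets $R^{\,i}_j(U)$ whenever $R^{\,i-1}(U)$ is nontrivial, this is a very strong assertion and is the entire content of the theorem for $j\ge 2$.

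The paper does not argue fiberwise. It reduces to $i=n-2$ by truncation (Lemma \ref{lemResTrunc} and (\ref{eqTruncTor})), identifies $\cI_{\beta_{n-2}+1-j}(\phi_{n-2})$ with the Fitting ideal $\mathrm{Fitt}_{\beta_{n-1}-1+j}(\cF)$, whose support is the locus where $\cF_v$ needs at least $\beta_{n-1}+j$ generators, and then rests everything on the local-algebra inequality $k-\rank(M)\ge\dim\mathrm{Tor}_1^R(R/P,M)$ for a module $M$ minimally generated by $k$ elements. Translated back into your fiberwise language, that inequality is literally $a(v)\ge a(v)+b(v)$, i.e.\ $b(v)\le 0$ --- so the difficulty you ran into is not dissolved by the Tor/Fitting reformulation, it is relocated into that single displayed inequality. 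You should scrutinize it rather than take it on faith: the paper derives it from ``$\rank(N)\ge k'$'' for the first syzygy $N$ of $\cF_v$, whereas Nakayama plus semicontinuity gives the opposite comparison $k'\ge\rank(N)$ in general (equality forcing $N$ to be free). For $j=1$ the statement is safe, since there $\{a+b\ge 1\}=\{a\ge 1\}$ follows from the Buchsbaum--Eisenbud nesting of the supports $V(\cI(\phi_{i-1}))\subseteq V(\cI(\phi_i))$ cited in the paper; for $j\ge 2$ your proposal contains no proof of the hard inclusion, and you should either supply the vanishing $b(v)=0$ on $\bP(R^{\,i}_j(U))$ or recognize that the equality may need to be weakened to an inclusion.
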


\begin{cor}\label{corHigherRCodim}
$\text{codim}\; R^{\,i}_j(U)\le \min\{d-1, (\beta_{i-1}+j)(\beta_{i+1}+j)\}.$
\end{cor}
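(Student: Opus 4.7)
The plan is to combine Theorem~\ref{propRefRes} with the standard codimension bound for degeneracy loci. The first step is to compute the generic rank of $\phi_i$ on $\bP=\bP(H^1(U))$ by unwinding the exactness of the resolution (\ref{eqF}). At a point of $\bP$ where (\ref{eqF}) restricts to an exact complex of vector spaces, the generic ranks $\rho_\bullet$ of the maps satisfy $\rho_{i-1}+\rho_i=b_i$, with $\rho_0=b_0=1$ since $\phi_0$ is injective; an immediate induction then yields $\rho_i=\beta_i$ for all $0\le i\le n-2$.

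Second, I would invoke the classical codimension bound for determinantal loci: if $\psi:\cE\to\cG$ is a morphism of locally free sheaves of ranks $a$ and $b$ on a scheme, then the locus $V(\cI_{r+1}(\psi))$ where $\rank\psi\le r$, when nonempty, has codimension at most $(a-r)(b-r)$. By Theorem~\ref{propRefRes}, $\bP(R^{\,i}_j(U))=V(\cI_{\beta_i+1-j}(\phi_i))$ is precisely the locus in $\bP$ where $\rank\phi_i\le\beta_i-j$. Since $\phi_i$ goes between bundles of ranks $b_i$ and $b_{i+1}$, the bound gives
\[
\codim_{\bP}\,\bP(R^{\,i}_j(U))\;\le\;(b_i-\beta_i+j)(b_{i+1}-\beta_i+j).
\]
The identities $b_i-\beta_i=\beta_{i-1}$ and $b_{i+1}-\beta_i=\beta_{i+1}$ are immediate from the definition of $\beta_\bullet$, so the right-hand side equals $(\beta_{i-1}+j)(\beta_{i+1}+j)$. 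Because $R^{\,i}_j(U)$ is a cone in $H^1(U)=\bC^{d-1}$, the codimension of its projectivization in $\bP^{d-2}$ agrees with the codimension of $R^{\,i}_j(U)$ in $\bC^{d-1}$, supplying the second term of the minimum.

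The bound $\codim\le d-1$ is then automatic: $R^{\,i}_j(U)$ is a nonempty subset of the ambient space $H^1(U)=\bC^{d-1}$ (it contains $0$ whenever $j\le b_i$), hence its codimension there is at most $d-1$. Taking the smaller of the two bounds yields the corollary.

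I do not anticipate a serious obstacle: granted Theorem~\ref{propRefRes} and the determination of the generic rank of $\phi_i$, the argument is essentially mechanical. The one place calling for care is keeping the affine and projective ambient spaces straight; in particular, in the degenerate case $R^{\,i}_j(U)=\{0\}$ the determinantal bound is vacuous (the projective locus being empty) and the ambient-dimension bound $d-1$ takes over.
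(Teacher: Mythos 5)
Your proposal is correct and follows essentially the same route as the paper: Theorem~\ref{propRefRes} identifies $\bP(R^{\,i}_j(U))$ with the support of $\cI_{\beta_i+1-j}(\phi_i)$, and the classical Eagon--Northcott codimension bound for determinantal loci, applied with generic rank $\rank\phi_i=\beta_i$ and the identities $b_i-\beta_i=\beta_{i-1}$, $b_{i+1}-\beta_i=\beta_{i+1}$, gives $(\beta_{i-1}+j)(\beta_{i+1}+j)$, with $d-1$ coming from the ambient dimension. The paper's proof is a one-line citation of exactly these two ingredients, so no further comparison is needed.
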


When the above inequality on the codimension of $R^{\,i}_j(U)$ is useful, we can say something stronger about $R^{\,i}_j(U)$. Note that in contrast with the conclusion of the next result, it is known that the irreducible components of $\bP(R^{\;1}_j(U))$ are mutually disjoint \cite{LY}, see also \cite{DPS}.

\begin{cor}\label{corConnected}
If $(\beta_{i-1}+j)(\beta_{i+1}+j)<d-2$, then $\bP(R^{\,i}_j(U))$ is connected.
\end{cor}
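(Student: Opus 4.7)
The plan is to identify $\bP(R^{\,i}_j(U))$ with a degeneracy locus of a morphism of vector bundles on $\bP\cong\bP^{d-2}$ whose Hom-bundle is ample, and then invoke the Fulton--Lazarsfeld connectedness theorem for such loci.

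First, Theorem \ref{propRefRes} identifies $\bP(R^{\,i}_j(U))$ with the degeneracy locus $D_{\beta_i-j}(\phi_i):=\{x\in\bP\;:\;\rank\phi_i(x)\le \beta_i-j\}$ inside $\bP=\bP(H^1(U))$. Since $\dim H^1(U)=d-1$ for a central, essential, indecomposable arrangement of degree $d$, we have $\bP\cong\bP^{d-2}$. Twisting (\ref{eqF}) by $\cO_\bP(n-1-i)$ represents $\phi_i$ as a morphism $\psi:E\to F$ with $E=\cO_\bP^{\oplus b_i}$ and $F=\cO_\bP(1)^{\oplus b_{i+1}}$; this operation leaves the rank locus unchanged, and the bundle $E^\vee\otimes F\cong\cO_\bP(1)^{\oplus b_ib_{i+1}}$ is ample on $\bP$.

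Next, a short check from the alternating-sum definition of $\beta_i$ gives the identity $b_i=\beta_i+\beta_{i-1}$ (and similarly $b_{i+1}=\beta_{i+1}+\beta_i$), so
$$\rank E-(\beta_i-j)=\beta_{i-1}+j,\qquad \rank F-(\beta_i-j)=\beta_{i+1}+j,$$
and the expected codimension of $D_{\beta_i-j}(\psi)$ in $\bP$ is $(\beta_{i-1}+j)(\beta_{i+1}+j)$.

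Finally, I would invoke the Fulton--Lazarsfeld connectedness theorem for degeneracy loci: if $X$ is a connected projective variety and $\psi:E\to F$ is a morphism of vector bundles with $E^\vee\otimes F$ ample, then $D_k(\psi)$ is nonempty and connected whenever $\dim X-(\rank E-k)(\rank F-k)\ge 1$. Under the assumption $(\beta_{i-1}+j)(\beta_{i+1}+j)<d-2$ we obtain $\dim\bP-(\beta_{i-1}+j)(\beta_{i+1}+j)\ge 1$, so the theorem applies to $\psi$ and yields that $\bP(R^{\,i}_j(U))$ is connected. The main obstacle is locating and correctly applying this connectedness theorem in the present form (a morphism of nontrivial vector bundles rather than a zero locus of a section of an ample bundle); once that is in place, the rest is the same numerical bookkeeping, based on $b_i=\beta_i+\beta_{i-1}$, that underlies Corollary \ref{corHigherRCodim}.
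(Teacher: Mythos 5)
Your proposal is correct and follows essentially the same route as the paper: Theorem \ref{propRefRes} identifies $\bP(R^{\,i}_j(U))$ with the degeneracy locus of $\phi_i$ on $\bP\cong\bP^{d-2}$, the Hom-bundle is $\cO_\bP(1)^{\oplus b_ib_{i+1}}$ hence ample, and the Fulton--Lazarsfeld connectedness theorem applies since $b_i-(\beta_i-j)=\beta_{i-1}+j$ and $b_{i+1}-(\beta_i-j)=\beta_{i+1}+j$. The paper cites this theorem as \cite{La}-7.2, which is indeed stated for morphisms of vector bundles with ample Hom-bundle, so the form you need is available.
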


It is known that  resonance propagates, that is, $R^i(U)\subset R^{i+1}(U)$, \cite{EPY}. We show that a deeper propagation holds:

\begin{cor}\label{corHigherResInclusions} We have:
\begin{itemize}
\item $R^{\,i}(U)\subset R^{\; i+1}_2(U)$  ($i\le n-2$),
\item $R^{\,i}(U)\subset R^{\; i+1}_j(U)$  ($i<n-2$ and $j\le 1+\frac{n-3}{i+1}$).
\end{itemize}
\end{cor}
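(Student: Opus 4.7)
First, I would translate both inclusions via Theorem~\ref{propRefRes}. For a nonzero $v$ and $i\le n-2$, the theorem yields $v\in R^{\,i}_k(U)\iff \rank(\phi_i(v))\le \beta_i-k$; for $i=n-1$, since $H^n(U)=0$, the complex $(H^\bullet(U),v\cup\cdot)$ gives directly $R^{\,n-1}_k(U)=\{v:\rank(\phi_{n-2}(v))\le b_{n-1}-k\}$. Each claimed inclusion $R^{\,i}\subset R^{\,i+1}_j$ thus becomes a rank-propagation statement along consecutive maps of the linear resolution~(\ref{eqF}).

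The edge case $i=n-2$ of the first bullet drops out immediately: if $v\in R^{n-2}(U)$, then $\rank(\phi_{n-2}(v))\le \beta_{n-2}-1$, and since $D$ is indecomposable the Crapo invariant $\beta_{n-1}$ is strictly positive, so $\beta_{n-2}-1\le \beta_{n-1}+\beta_{n-2}-2=b_{n-1}-2$, giving $v\in R^{\,n-1}_2(U)$.

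For the interior range $i\le n-3$, I would pass to the homological algebra of the cokernel sheaf $\cF$. At a closed point $v\in\bP$, localizing~(\ref{eqF}) yields a minimal free resolution of the stalk $\cF_v$ over the regular local ring $\cO_{\bP,v}$ of dimension $n-1$, and tensoring with the residue field $\kappa(v)$ identifies the $(i+1)$-th cohomology of $(H^\bullet(U),v\cup\cdot)$ with $\operatorname{Tor}^{\cO_{\bP,v}}_{n-2-i}(\cF_v,\kappa(v))$. The hypothesis $v\in R^{\,i}(U)$ forces $\operatorname{Tor}_{n-1-i}(\cF_v,\kappa(v))\neq 0$, so the projective dimension of $\cF_v$ is at least $p:=n-1-i\ge 2$. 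A Horrocks / Buchsbaum--Eisenbud--Horrocks-type lower bound on the penultimate Betti number of a minimal free resolution (available in the linear-resolution regime of~(\ref{eqF})) then produces $\dim\operatorname{Tor}_{p-1}(\cF_v,\kappa(v))\ge j$ for $j$ in the range stated in the corollary, which translates back to $v\in R^{\,i+1}_j(U)$.

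The principal obstacle will be obtaining the precise quantitative bound $j\le 1+\tfrac{n-3}{i+1}$. Since this is strictly weaker than what the full Buchsbaum--Eisenbud--Horrocks conjecture would predict, I expect the proof to avoid that conjecture in favour of a direct construction: given a nonzero class $\alpha$ in the $i$-th cohomology of $(H^\bullet(U),v\cup\cdot)$, the assignment $w\mapsto[w\cup\alpha]$ defines a linear map $\mu_\alpha\colon H^1(U)\to H^{i+1}(H^\bullet(U),v\cup\cdot)$ (well-defined because $v\cup v=0$), whose kernel always contains $v$. Exhibiting $j$ independent classes should then reduce to an estimate on $\dim\ker\mu_\alpha$ drawn from the combinatorial structure of the arrangement together with the codimensional hypothesis on $n,i,j$.
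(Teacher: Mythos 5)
Your reduction via Theorem~\ref{propRefRes} and the disposal of the edge case $i=n-2$ of the first bullet are correct (and your rank computation there matches what one gets from the Tor description: $v\in R^{n-2}(U)$ forces $\cF_v$ to be non-free, hence to need at least $\rank\cF_v+1=\beta_{n-1}+1\ge 2$ generators). But from that point on the proposal is a plan rather than a proof, and the plan as stated does not go through. The paper's actual argument is a citation of the Buchsbaum--Eisenbud structure theory of finite free resolutions: the second bullet follows from \cite{BE}-Corollary~6.2 combined with \cite{BE}-Theorem~3.1(b) (radical containments among ideals of \emph{lower-order} minors of consecutive maps in a finite free resolution), and the first bullet from \cite{BE}-(10.5), which rests on their Conjecture~10.1, proved by Tchernev--Weyman \cite{TW}. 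Your proposed substitute --- a ``Horrocks / Buchsbaum--Eisenbud--Horrocks-type lower bound on the penultimate Betti number, available in the linear-resolution regime'' --- is not available: linearity is a graded notion and is destroyed when you localize (\ref{eqF}) at a point $v$ to get the minimal free resolution of the stalk $\cF_v$ over $\cO_{\bP,v}$, so the Herzog--K\"uhl theorem does not apply there, and in the ungraded local setting the BEH bound is an open conjecture. You then concede you cannot extract the bound $j\le 1+\frac{n-3}{i+1}$ and offer an unexecuted cup-product construction; that is precisely the content of the corollary, so the gap is the whole of the second bullet and of the first bullet for $i\le n-3$.

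It is worth noting that the first bullet in the range $i\le n-3$, which you are circling around, does admit an elementary argument that avoids Tchernev--Weyman: if $v\in R^{\,i}(U)$ then $\text{Tor}_{n-1-i}^{\cO_{\bP,v}}(\kappa(v),\cF_v)\ne 0$, so the minimal free resolution $0\to G_q\to\cdots\to G_0\to\cF_v\to 0$ has length $q\ge p:=n-1-i\ge 2$; exactness over a domain gives $\rank G_k=r_k+r_{k+1}$ with every $r_k\ge 1$ for $1\le k\le q$, whence $\dim\text{Tor}_{p-1}=\rank G_{p-1}\ge 2$, i.e.\ $v\in R^{\;i+1}_2(U)$. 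This only uses that no intermediate map in a minimal resolution can vanish, not any conjectural Betti-number bound. The second bullet, however, asserts that an intermediate Betti number is $\ge j$ with $j$ possibly $\ge 3$, equivalently that certain $r_k\ge 2$; no such statement follows from exactness alone, and this is exactly where the Buchsbaum--Eisenbud results on the depth and radicals of the ideals $I_{r_k-\ell}(f_k)$ are needed. To complete your proof you would have to import those results (or reprove them), which is what the paper does.
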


\me  Corollary \ref{corHigherRCodim} can be improved when $j=1$. Define from now
$$
q_i:=\codim R^{i}(U) = d-1-\dim R^i (U)
$$
for $0\le i \le n-1$.

\begin{thm}\label{propCod}
$$n-1-i\le q_i\le \min \{d-1, (\beta_{i-1}+1)(\beta_{i+1}+1),\beta_{i+1}+i+1\} .$$
\end{thm}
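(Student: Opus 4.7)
By Theorem~\ref{propRefRes} with $j=1$, the projectivization $\bP(R^i(U))$ is cut out by the Fitting ideal $\cI_{\beta_i}(\phi_i)$, so $q_i$ equals the height of this ideal. The bound $q_i\le d-1$ is immediate from $0\in R^i(U)$, and $q_i\le(\beta_{i-1}+1)(\beta_{i+1}+1)$ is Corollary~\ref{corHigherRCodim} specialized to $j=1$, itself a direct application of Macaulay's classical bound on heights of ideals of minors of a matrix of linear forms. The substantive content is therefore the lower bound $q_i\ge n-1-i$ and the upper bound $q_i\le\beta_{i+1}+i+1$.

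For the lower bound I would invoke the Buchsbaum--Eisenbud acyclicity criterion on the exact complex (\ref{eqF}). Exactness forces $\mathrm{grade}\,\cI_{\beta_k}(\phi_k)\ge n-1-k$ for each $0\le k\le n-2$; for $k=i$ this gives $\mathrm{grade}\,\cI_{\beta_i}(\phi_i)\ge n-1-i$, and since grade bounds height from below, $q_i\ge n-1-i$.

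For the upper bound $q_i\le\beta_{i+1}+i+1$, I would exploit the factorization $\phi_i=\bar\phi_i\circ\pi$, where $\pi\colon\cO_\bP(-n+i+1)^{b_i}\twoheadrightarrow\cF_i:=\mathrm{coker}(\phi_{i-1})$ is the canonical surjection and $\bar\phi_i\colon\cF_i\hookrightarrow\cO_\bP(-n+i+2)^{b_{i+1}}$ is injective by exactness of (\ref{eqF}). The sheaf $\cF_i$ has generic rank $\beta_i$ and, via the truncation of (\ref{eqF}), a length-$i$ linear resolution; hence its projective dimension is at most $i$, and it is locally free precisely off $\bP(R^{i-1}(U))$. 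Where $\cF_i$ is locally free, $\bar\phi_i$ is a morphism of vector bundles of ranks $\beta_i$ and $b_{i+1}$, whose rank-drop locus has codimension at most $\beta_{i+1}+1$ by the elementary Macaulay/Porteous estimate; the extra $+i$ records the contribution from the non-locally-free locus.

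The main obstacle will be justifying this $+i$ correction: components of $R^i(U)$ contained in $\bP(R^{i-1}(U))$ sit inside the non-locally-free locus of $\cF_i$ and cannot be handled directly by the bundle estimate. The cleanest resolution is a generalized Macaulay-type inequality for the rank-drop locus of a map $\cE\to\cO^b$ whose source $\cE$ is a coherent sheaf of projective dimension at most $p$ and generic rank $r$: its codimension is at most $b-r+1+p$. Applied to $\bar\phi_i$ with $\cE=\cF_i$, $r=\beta_i$, $b=b_{i+1}$, $p=i$, this yields exactly $\beta_{i+1}+i+1$.
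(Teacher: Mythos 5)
Your treatment of three of the four bounds is sound and essentially matches the paper: the lower bound $q_i\ge n-1-i$ via the Buchsbaum--Eisenbud acyclicity criterion (the paper cites \cite{E}-20.9 and converts depth to codimension using that $S$ is Cohen--Macaulay), the bound $q_i\le d-1$ from nonemptiness of $R^i(U)$, and the product bound from the Eagon--Northcott estimate (\ref{eqEagon}) applied to $\cI_{\beta_i}(\phi_i)$, with $(b_i-\beta_i+1)(b_{i+1}-\beta_i+1)=(\beta_{i-1}+1)(\beta_{i+1}+1)$.

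The proof of the remaining bound $q_i\le\beta_{i+1}+i+1$ has a genuine gap. You reduce it to a ``generalized Macaulay-type inequality'' for the rank-drop locus of a map $\cE\to\cO^b$ with $\cE$ coherent of projective dimension $\le p$ and generic rank $r$, asserting codimension $\le b-r+1+p$. This is not a standard citable result, and you neither prove it nor indicate a proof; it is exactly where the difficulty lives, since components of $R^i(U)$ contained in the non-locally-free locus $\bP(R^{i-1}(U))$ of $\cF_i$ are invisible to the vector-bundle degeneracy estimate. Note also that patching this by induction does not obviously close: off $\bP(R^{i-1}(U))$ the bundle estimate gives the \emph{stronger} bound $\beta_{i+1}+1$, so the only problematic case is $R^i(U)=R^{i-1}(U)$, and then the inductive hypothesis yields $q_i=q_{i-1}\le\beta_i+i$, which is not comparable to $\beta_{i+1}+i+1$ without knowing $\beta_i\le\beta_{i+1}+1$. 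The paper proves this bound by an entirely different mechanism (it is part (d) of Theorem \ref{thmMain2-v2}, following Lazarsfeld--Popa): restrict the truncated linear resolution, twisted so that its cokernel $\cK_i(n-i-2)$ is globally generated, to a generic linear $\bP'\cong\bP^{q_i-1}$ transversal to $R^i(U)$; there the cokernel becomes a vector bundle $\cF'$ of rank $\beta_{i+1}$ with $H^l(\bP',\cF'(k))=0$ for all $k$ in the range $0<l<q_i-i-1$; the Evans--Griffith splitting criterion then forces $\rank(\cF')>q_i-i-2$ (else $\cF'$ would split, contradicting the Chern polynomial identity coming from the exact sequence), giving $\beta_{i+1}\ge q_i-i-1$. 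You would need either to supply a proof of your generalized determinantal inequality or to switch to an argument of this type.
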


The computational complexity of the resonance varieties is discussed briefly in section \ref{subsecComp}, see for example Proposition \ref{propFactor}.

\me
Regarding the shape of the Betti numbers of the complement, using a vector bundle method of Popa-Lazarsfeld \cite{LP,Lo} we obtain Schur positivity of certain combinations of the Betti numbers of $U$. Define for $j>0$
$$c^{(j)}_t:= \prod_{k=1}^{j+1} (1-kt)^{(-1)^kb_{j+1-k}}.$$ Let $c_i^{(j)}$ denote the coefficient of $t^i$ in $c^{(j)}_t$. 

\begin{thm}\label{thmMain2} Let $0<j<n-1$. If $j=n-2$, assume that $q_{n-2}>1$. Then:

\ni (a) Any Schur polynomial of weight $<q_j$ in $c_1^{(j)},\ldots ,c_{q_j-1}^{(j)}$  is non-negative. In particular, $c_i^{(j)}\ge 0$ for $1\le i <q_j$, and
\begin{equation}\label{eqcj}
c_1^{(j)}=\sum_{k=1}^{j+1}(-1)^{k+1}\cdot k\cdot b_{j+1-k} \ge 0.
\end{equation}

\ni (b) $c_i^{(j)}=0$ for $i>\min \{\beta_{j+1},q_j-1\}$.

\ni (c) $q_j> \max\{ i\mid c^{(j)}_i\ne 0 \}$.

\ni (d) The coefficients $c^{(j)}_i$ of the polynomial $c^{(j)}_t$ form a log concave sequence.
\end{thm}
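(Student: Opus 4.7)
The plan is to realise $c^{(j)}_t$ as the total Chern polynomial of a globally generated coherent sheaf on $\bP=\bP(H^1(U))\cong \bP^{d-2}$, and then apply the positivity machinery for such sheaves developed in \cite{LP,Lo}. Set $\cG_j:=\mathrm{coker}(\phi_j)$; exactness of~(\ref{eqF}) furnishes the linear free resolution
\begin{equation*}
0\to E_0\to E_1\to\cdots\to E_{j+1}\to \cG_j\to 0,\qquad E_l=\cO_\bP(-n+1+l)^{b_l}.
\end{equation*}
Twisting by $\cO_\bP(n-j-2)$ turns this into a resolution whose last free term is $\cO_\bP^{b_{j+1}}$, so $\cG_j(n-j-2)$ is globally generated; computing Chern classes via the alternating product along the twisted resolution identifies $c(\cG_j(n-j-2))$ in $H^*(\bP^{d-2})=\bZ[h]/(h^{d-1})$ with $c^{(j)}_t|_{t=h}$. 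By Theorem~\ref{propRefRes}, $\phi_j$ has generic rank $\beta_j$ and drops rank exactly along $\bP(R^j(U))$, so $\cG_j(n-j-2)$ is locally free of rank $\beta_{j+1}$ on $U_j:=\bP\setminus\bP(R^j(U))$, whose complement has codimension $q_j$.

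For part (a) I would invoke the theorem, central to the Lazarsfeld-Popa method and going back to Fulton-Lazarsfeld, that every Schur polynomial of weight $<q$ in the Chern classes of a globally generated coherent sheaf whose non-locally-free locus has codimension $\ge q$ is represented by an effective class. On $\bP^{d-2}$ such a class is a non-negative integer multiple of $h^{|\lambda|}$, which yields the Schur positivity of weight $<q_j$ and, specialising to $\lambda=(1)$, the explicit inequality~(\ref{eqcj}). The partial vanishing $c_i^{(j)}=0$ for $\beta_{j+1}<i<q_j$ is then immediate from the rank-$\beta_{j+1}$ bound on $U_j$ combined with the restriction isomorphism $H^{2i}(\bP^{d-2})\xrightarrow{\sim} H^{2i}(U_j)$, valid for $i<q_j$. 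The complementary vanishing $c_i^{(j)}=0$ for $i\ge q_j$, i.e.\ part (c), is more subtle and uses the Fitting-ideal description $\cI_{\beta_j+1}(\phi_j)$ of $\bP(R^j(U))$ from Theorem~\ref{propRefRes} together with the explicit linear shape of the length-$(j+2)$ resolution of $\cG_j(n-j-2)$, in order to force cancellation of the higher Chern contributions. Combining these yields (b). Finally, log concavity (d) follows from nefness of $\cG_j(n-j-2)|_{U_j}$ combined with the vanishing in (c), via the standard Khovanskii-Teissier-type inequalities for globally generated bundles.

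The main obstacle is clearly part (c): generic rank considerations on the locally free locus cannot by themselves prevent a coherent sheaf from having non-zero higher Chern classes concentrated on its singular locus, so the proof must exploit in an essential way both the determinantal description of the resonance variety and the precise linear shape of the Eisenbud-Popescu-Yuzvinsky resolution. The technical hypothesis $q_{n-2}>1$ at the boundary $j=n-2$ is exactly what is needed to ensure the restriction isomorphism above is available at $i=1$, enabling the argument to proceed uniformly.
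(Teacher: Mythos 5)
Your setup --- the EPY resolution, the truncation at degree $j$, the twist by $\cO_\bP(n-j-2)$ making $\cK_j(n-j-2)$ globally generated of generic rank $\beta_{j+1}$, and the identification of its non-locally-free locus with $\bP(R^{j}(U))$ of codimension $q_j$ --- is exactly the paper's. The step you are missing, and which you correctly flag as the main obstacle, is the one the paper actually performs: it does not stay on $\bP^{d-2}$, but restricts the entire twisted, truncated resolution to a generic linear subspace $\bP'=\bP(W)\cong\bP^{q_j-1}$ with $W$ transversal to $R^{j}(U)$ (for $j=n-2$ this needs $\dim\bP'=q_{n-2}-1\ge 1$, whence the hypothesis). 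On $\bP'$ the cokernel $\cF'$ is an honest globally generated vector bundle of rank $\beta_{j+1}$, and $c^{(j)}_t$ is read as its Chern polynomial. Then all parts are read off from $\cF'$ at once: (a) is \cite{Fu}-12.1.7(a) for globally generated bundles; (b) and (c) are the tautologies that $c_i(\cF')=0$ for $i>\rank\,\cF'=\beta_{j+1}$ and for $i>\dim\bP'=q_j-1$. There is no ``cancellation of higher Chern contributions concentrated on the singular locus'' to engineer, because $\bP'$ misses the singular locus entirely. Your argument for (a) and for the range $\beta_{j+1}<i<q_j$ of (b) is fine --- the black-box positivity statement you invoke for coherent sheaves is proved in \cite{LP} precisely by this restriction --- but for (c), and hence for completing (b), you offer only the assertion that the determinantal description ``forces cancellation,'' which is not an argument; on $\bP^{d-2}$ the classes $c_i(\cK_j(n-j-2))$ with $q_j\le i\le d-2$ are not controlled by anything in your setup.

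Part (d) is not recovered by your route either. The inequality $[c_i^{(j)}]^2\ge c_{i-1}^{(j)}c_{i+1}^{(j)}$ is the Schur polynomial $\Delta_{(i,i)}$, of weight $2i$, so Fulton--Lazarsfeld/Khovanskii--Teissier positivity only yields it for $2i<q_j$, which need not cover the full range $i\le\min\{\beta_{j+1},q_j-1\}$ where $c_i^{(j)}$ can be nonzero. The paper instead derives (d) from (c): once $c^{(j)}_t$ is identified with the degree $<q_j$ Chern polynomial of $\cF'$, it argues that this polynomial has real coefficients and real roots (coming from the factors $(1-kt)$) and applies Newton's inequalities \cite{Sta}. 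So (d) sits downstream of exactly the part you left unproved, and your proposed replacement tool is too weak in the relevant range of weights.
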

\ni Recall that the first few Schur polynomials are: $c_1$; $c_2$, $c_1^2-c_2$; $c_3$, $c_1c_2-c_3$, $c_1^3-2c_1c_2+c_3$. Other lower bounds on $b_i$, in terms of $b_1,\ldots ,b_{i-1}$ can determined from Theorem \ref{thmMain2}, see \ref{subsLower}.

\me The outline of the article is the following. In the second section we recall the basic definitions and lower bounds on Betti numbers of $U$ from \cite{DW, Br}. The third section is the core of the article, where we prove the statements from this Introduction. Next section is a brief discussion of Theorem \ref{thmMain2}. We end the article with a section containing some remarks about resonance varieties.

\me I would like to thank A. Dimca, M. Falk, A. Suciu, and S. Yuzvinsky  for comments and suggestions, to G. Denham and H. Schenck for sharing their preprint \cite{DS}, and to the Max Planck Institute in Bonn and the Johns Hopkins University for their hospitality during the writing of the article.

\section{Preliminaries}

\subsection{Hyperplane arrangements.}\label{subsDefs} Let us recall the basic terminology. An {\it affine} (resp. {\it projective}) {\it hyperplane arrangement} in $\bC^n$ (resp. $\bP^{n-1}$) is a finite set of hyperplanes. We will abuse notation and identify the hyperplane arrangement with the corresponding reduced divisor. An arrangement is  {\it essential} if the intersection of all hyperplanes  has dimension at most zero. An arrangement is {\it central} if the intersection of all hyperplanes is nonempty. Non-essential implies central. An arrangement is {\it indecomposable} if it is not the product of two distinct hyperplane arrangements. In other words, there is no choice of coordinates for which the equation of the arrangement is a product of two non-constant polynomials in two disjoint sets of variables.
 
\me
An affine central arrangement $D$ will tacitly be assumed to contain the origin in any of its hyperplanes.  For a hyperplane arrangement $D$ and a linear subspace $S$ of the ambient space, we will denote by $D_{|S}$ the hyperplane arrangement $D\cap S$ in $S$.

\begin{comment}
\me For a projective arrangement $D$, we will denote by $\hat{D}$ the affine arrangement obtained as the cone over $D$. For an affine arrangement $D$ in $\bC^{n-1}$, we also denote by $\hat{D}$ the cone  in $\bC^{n}$ over $D$. This is given by the homogenized equation of $D$. 
\end{comment}

\me
For every affine hyperplane arrangement $D$ in $\bC^n$ we will consider, initially, the following sets of numbers: $h_i$, $b_i$, $\beta_i$. These are as follows:
\begin{align*}
h_i &=\dim H^i(\bC^n-D,\bC),\\
b_i&=h_i-h_{i-1}+\ldots +(-1)^{i}h_0,\\
\beta_i  &= b_{i}-b_{i-1}+\ldots + (-1)^{i}b_0\\
& = h_i-2h_{i-1}+3h_{i-2}-\ldots +(-1)^i h_0.
\end{align*}
When $D$ is central, the numbers $h_i$ are also known as the absolute values of the {\it Whitney numbers of the first kind}, and the number $\beta_{n-1}$ is commonly called  the  {\it Crapo invariant}.

\subsection{Central affine arrangements.}
Define
$$
P^{DW}(d,n,i):=\binom{n}{i} +(d-n)\binom{n-1}{i-1},
$$
and
$$
P^{B}(d,n,i):= \binom{n}{i} +(d-n)\binom{n}{i-1}  -\delta_{i,n-1}.
$$

\begin{thm}{\bf (Dowling-Wilson \cite{DW})} Let $D$ be a central essential hyperplane arrangement in $\bC^n$ of degree $d$. Then
\begin{equation*}\label{eqDW}
h_i \ge P^{DW}(d,n,i).
\end{equation*}
\end{thm}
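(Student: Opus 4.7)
The plan is to prove the Dowling--Wilson bound by double induction on the rank $n$ and the number of hyperplanes $d$, driven by Orlik--Solomon's deletion--restriction formula
$$h_i(D) = h_i(D \setminus H) + h_{i-1}(D \cap H)$$
for any hyperplane $H \in D$, where $D \cap H$ is viewed as an arrangement in $H \cong \bC^{n-1}$. The base case $d = n$ is immediate: since $D$ is central and essential in $\bC^n$ with exactly $n$ hyperplanes, a linear change of coordinates identifies it with the coordinate arrangement, so $\bC^n \setminus D \cong (\bC^*)^n$ and $h_i = \binom{n}{i} = P^{DW}(n,n,i)$.

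For the inductive step with $d > n$, I would pick $H \in D$ so that the deletion $D \setminus H$ is still essential in $\bC^n$. This amounts to $H$ not being a coloop of the underlying matroid of $D$, and such $H$ exists precisely because the rank is $n$ while the ground set has $d > n$ elements: if every element lay in every basis of size $n$, one would be forced into $d = n$. The restriction $D \cap H$ is then automatically essential in $H \cong \bC^{n-1}$, since contracting a non-loop always drops the rank by exactly one. Writing $d' \ge n-1$ for the number of distinct hyperplanes of $D \cap H$, induction (on $d$ for the deletion, on $n$ for the restriction) yields
$$h_i(D \setminus H) \ge \binom{n}{i} + (d-1-n)\binom{n-1}{i-1}$$
and
$$h_{i-1}(D \cap H) \ge \binom{n-1}{i-1} + (d'-n+1)\binom{n-2}{i-2} \ge \binom{n-1}{i-1}.$$
Adding the two lower bounds via the deletion--restriction formula gives $h_i(D) \ge \binom{n}{i} + (d-n)\binom{n-1}{i-1} = P^{DW}(d,n,i)$, which closes the induction.

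The main obstacle to watch is the choice of a deletable hyperplane in the inductive step: essentiality is a genuine constraint, and a careless choice of $H$ can collapse the rank and break the induction. The matroid-theoretic existence of non-coloops when $d > n$ is therefore the only serious nontrivial input, and handling it cleanly is where I expect the actual work to sit. Once this selection is made, everything else --- the Boolean base case, the rank count for the contraction, and the summation of the two bounds --- is routine bookkeeping.
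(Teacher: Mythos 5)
Your argument is correct. Note that the paper does not prove this statement at all --- it is quoted as a black box from Dowling--Wilson's lattice-theoretic paper --- so there is no in-text proof to compare against; the original argument works directly with the M\"obius function of a geometric lattice, whereas you give a clean arrangement-theoretic induction. The two inputs you rely on are both sound: the deletion--restriction identity $\pi(D,t)=\pi(D\setminus H,t)+t\,\pi(D\cap H,t)$ holds for an arbitrary choice of $H$ (it is the Poincar\'e-polynomial form of $\chi_D=\chi_{D\setminus H}-\chi_{D\cap H}$), and your matroid point is right: the set of coloops is independent, so if all $d$ elements were coloops one would have $d\le n$, forcing $d=n$; hence for $d>n$ a non-coloop $H$ exists, the deletion stays essential of rank $n$ with $d-1$ hyperplanes, and the restriction is automatically central and essential of rank $n-1$ with $d'\ge n-1$ hyperplanes. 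The induction on the total number of hyperplanes then closes exactly as you compute, since you only need the crude consequence $h_{i-1}(D\cap H)\ge\binom{n-1}{i-1}$ from the restriction (discarding the nonnegative $(d'-n+1)\binom{n-2}{i-2}$ term), and the base case $d=n$ is the Boolean arrangement. This buys a short, self-contained proof using only standard arrangement technology, at the cost of not seeing the finer lattice structure (e.g.\ the characterization of when equality holds) that the original approach provides.
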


In the indecomposable case, we have the following improvement. Let 
$$\Om:=\{(x,y)\in \bN^2\mid  x-y\ge 2, (x,y)\ne (7,4)\}.$$

\begin{thm}\label{thmBr}{\bf (Brylawski \cite{Br})} Let $D$ be an indecomposable central essential hyperplane arrangement in $\bC^n$ of degree $d$. If $(d,n)\in \Om$ then 
\begin{equation*}\label{eqBr}
h_i\ge P^B(d,n,i)
\end{equation*}
if $i<n$, and
\begin{equation*}\label{eqBr2}
\beta_{n-1}\ge \max \{1, d+2-2n\}.
\end{equation*}
\end{thm}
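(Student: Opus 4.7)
The plan is to work matroid-theoretically: the Whitney numbers are the coefficients of the characteristic polynomial $\chi_M(t)=\sum_{i=0}^{n}(-1)^{i}h_{i}t^{n-i}$ of the matroid $M$ of $D$, and $D$ is indecomposable iff $M$ is a connected matroid. The engine is deletion-contraction, $\chi_M(t)=\chi_{M\setminus e}(t)-\chi_{M/e}(t)$, equivalently $h_i(M)=h_i(M\setminus e)+h_{i-1}(M/e)$ for $e$ neither a loop nor a coloop. The strategy is induction on $d$, using the Dowling--Wilson bound of the previous theorem as a fallback on any minor that loses connectedness.

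For the first inequality, I would induct by contraction: pick $e\in M$ so that $M/e$ is connected of rank $n-1$ on $d-1$ elements with parameters still in $\Omega$. Applying the inductive hypothesis to $M/e$ and Dowling--Wilson to $M\setminus e$ yields
\[
h_i(M)\geq P^{DW}(d-1,n,i)+P^{B}(d-1,n-1,i-1),
\]
and a Pascal's-rule calculation gives the exact identity
\[
P^{DW}(d-1,n,i)+P^{B}(d-1,n-1,i-1)=P^{B}(d,n,i),
\]
where the $-\delta_{i,n-1}$ correction in $P^B$ is precisely what makes the Kronecker terms match (note $\delta_{i-1,n-2}=\delta_{i,n-1}$). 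The base case $d=n+2$, i.e., connected rank-$n$ matroids of corank $2$, admits an explicit classification as parallel/series extensions of smaller uniform matroids and is verified by inspection.

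For $\beta_{n-1}\geq \max\{1,d+2-2n\}$: the bound $\beta_{n-1}\geq 1$ is Crapo's classical theorem on the beta invariant of a connected matroid, which I would quote. Since $\beta_{n-1}(M)=\beta_{n-1}(M\setminus e)+\beta_{n-1}(M/e)$ under deletion-contraction, and $\beta_{n-1}\geq 0$ always (the Tutte polynomial has non-negative coefficients), contracting by a suitable $e$ whose contraction is connected with parameters in $\Omega$ gives by induction
\[
\beta_{n-1}(M)\geq \beta_{n-1}(M/e)\geq \max\{1,d+3-2n\}\geq \max\{1,d+2-2n\},
\]
closing the bound easily.

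The main obstacle is the excluded pair $(d,n)=(7,4)$: the contraction reduction from $(8,5)$ lands in this case, so $(8,5)$ must also be handled separately, and indeed there exists a connected rank-$4$ matroid on $7$ points (related to Fano-type geometries) whose Whitney numbers genuinely violate $P^{B}(7,4,i)$, justifying the exclusion. Verifying that $(7,4)$ is the \emph{only} such obstruction, and guaranteeing that a suitable $e\in M$ for the contraction step always exists for large enough $M$, requires a Tutte-type structure theorem for connected matroids together with a finite case analysis of small connected matroids where the naive structure theorem fails. This bookkeeping, together with the base case, is where I expect the bulk of the technical work to lie.
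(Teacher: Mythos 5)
The paper offers no proof of this statement: Theorem~\ref{thmBr} is imported verbatim from Brylawski's paper \cite{Br} as a black box, so the only thing to judge is whether your sketch would actually reconstitute that result. Parts of it are sound: the Pascal-rule identity $P^{DW}(d-1,n,i)+P^{B}(d-1,n-1,i-1)=P^{B}(d,n,i)$ checks out (including the matching of the Kronecker terms), and quoting Crapo's theorem for $\beta_{n-1}\ge 1$ is legitimate.

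However, the induction as written has a gap that is not mere bookkeeping. The Whitney numbers $h_i$ are invariants of the geometric lattice, i.e.\ of the \emph{simplification} of the matroid, and the contraction $M/e$ of a simple matroid is simple only when $e$ lies on no rank-$2$ flat with three or more atoms. Otherwise the simplification of $M/e$ has $d'<d-1$ atoms, the inductive hypothesis only yields $h_{i-1}(M/e)\ge P^{B}(d',n-1,i-1)$, and the composition law above no longer produces $P^{B}(d,n,i)$. An element $e$ whose contraction is simultaneously connected and simple need not exist (think of points of a projective plane, where every point lies on long lines), and in fact it \emph{cannot} exist for Brylawski's extremal matroids: your own $\beta$-argument, if the step were always available, would give $\beta_{n-1}\ge \max\{1,d+3-2n\}$, strictly beating a bound that is attained. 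So the reduction fails exactly on the configurations that make the theorem sharp, and the real content of \cite{Br} is the analysis of what to do when no good $e$ exists. Two smaller points: your base case is misidentified --- the step $(d,n)\mapsto(d-1,n-1)$ preserves the corank $d-n$, so the induction bottoms out at small rank (rank $2$, where $M=U_{2,d}$ and the bound is checked directly), not at $d=n+2$; and the $(7,4)$ exclusion together with the ``finite case analysis'' is asserted rather than carried out. For the purposes of this paper none of this matters, since the theorem is correctly used as a citation, but as a standalone proof the proposal does not close.
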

\ni For examples when the bounds are achieved see {\it loc. cit.}

\begin{comment}
\begin{rem}
In \cite{Br}, $\Om$ is replaced by the set
$$\{(x,y)\in \bN^2\mid y\ge 3, x-y\ge 2, (x,y)\ne (7,4)\}.$$
Our assumptions imply that it is sufficient to work with $\Om$.
\end{rem}
\end{comment}

\begin{comment}
\me
\subsection{Noncentral affine arrangements.} Let $S$ be a generic linear subspace of $\bC^n$ of dimension $s$. For a hyperplane arrangement $D$ in $\bC^n$, we have
$$
h_i(D) = h_i(D_{|S})
$$
for $0\le i\le s$, by the combinatorial invariance of the cohomology ring of the complement of an arrangement \cite{OS}.

\me
Conversely, if $D$ is a noncentral affine arrangement in $\bC^n$, let  $\hat{D}$ be a generic cone of $D$ in $\bC^{n+1}$. If $D$ is indecomposable, so is $\hat{D}$. We have for $0\le i\le n$
\begin{align*}
h_i(D) &=h_i(\hat{D}),\\
\beta_i(D) &=\beta_i(\hat{D}).
\end{align*}

\begin{cor} Let $D$ be a noncentral hyperplane arrangement in $\bC^n$ of degree $d$. Then for $0\le i\le n$,
$$
h_i \ge P^{DW}(d,n+1,i).
$$
If $D$ is indecomposable and $(d,n+1)\in\Om$, then
$$h_i\ge P^{B}(d,n+1,i)$$
and
$$\beta_n \ge \max \{1, d-2n\}.$$
\end{cor}

\end{comment}

\me
Let $D$ be a central hyperplane arrangement in $\bC^n$. Then
$$
b_i=H^i(\bP^{n-1}-\bP(D),\bC).
$$
Indeed,
$$
\pi (D,t)=(1+t)\pi (\bP(D),t),
$$
where we denote by $\pi(Z,t)$ the Poincar\'e polynomial of the complement of $Z$, \cite{OT}. Note:
\begin{align*}
h_i &=b_i+b_{i-1},\\
\beta_i & = b_{i}-b_{i-1}+\ldots + (-1)^{i}b_0,\\
b_i&=h_i-h_{i-1}+\ldots +(-1)^{i}h_0 = \beta_i +\beta_{i-1}.
\end{align*}
 
\begin{comment}
\me
Let $D'$ be the hyperplane arrangement in $\bC^n$ obtained by forgetting one of the hyperplanes. Consider the affine space $\bC^{n-1}$ obtained  as the complement in $\bP^{n-1}$ of the removed hyperplane. Let $S$ be a generic linear subspace of $\bC^{n-1}$ of dimension $s\le n-1$. Then $$b_i=h^i(S-\overline{D '}_{|S})$$ for $0\le i\le s$. Hence we can transfer the Betti number inequalities from the affine case to the projective case. Note that $\ol{D '}_{|S}$ is an affine arrangement in $S=\bC^s$ of degree $d-1$, which is noncentral if $D$ is essential and indecomposable. Also, $\ol{D'}_{|S}$ is indecomposable iff $\ol{D'}_{|\bC^{n-1}}$ is.

\begin{cor} Let $D$ be a central essential indecomposable hyperplane arrangement in $\bC^n$ of degree $d$.  Let $0\le i\le n-1$. Then
\begin{align*}
b_i\ge \max \{ P^{DW}(d-1,s+1,i)\mid i\le s\le n-1 \}.
\end{align*}
 If in addition $\ol{D'}_{|\bC^{n-1}}$ is indecomposable, then
$$
b_i\ge\max\{ P^B(d-1,s+1,i) \mid i\le s\le n-1, (d-1,s+1)\in\Om \},
$$
and
$$
\beta_i\ge\max \{1,d-1-2i\}\quad\text{ if }(d-1,i+1)\in\Om .
$$
\end{cor}

\begin{rem} 
$D'$ is also a cone of $\ol{D'}_{|\bC^{n-1}}$. However, $\ol{D'}_{|\bC^{n-1}}$ is a generic hyperplane section of $D'$ iff the hyperplane removed from $D$ is generic with respect to $D'$. 
In this case, since 
$$
\pi(D,t)=\pi(D',t)+t\pi('D,t),
$$
where $'D$ is the restriction of $D$ to the removed hyperplane, we have that
$$
\pi(D',t)=\pi(\ol{A},t)=\pi('D,t).
$$ 
\end{rem}

\end{comment}

\me Let $S$ be a generic subvector space of $\bC^n$ of dimension $s+1$. Then
$$
b_i(D)=b_i(D_{|S})
$$ 
for $0\le i\le s$, by the combinatorial invariance of $b_i$, \cite{OS}.  Hence also
\begin{align*}
h_i(D)&=h_i(D_{|S}),\\
\beta_i(D)&=\beta_i(D_{|S}),
\end{align*}
for $0\le i\le s$. If $D$ is indecomposable, then so is $D_{|S}$ by Corollary \ref{corBeta}. If $D$ is essential, so is $D_{|S}$. Applying Theorem \ref{thmBr}, we obtain:

\begin{cor}\label{corBrbds}
Let $D$ be a central essential indecomposable hyperplane arrangement in $\bC^n$ of degree $d$. For $0< i< n$,
$$
h_i\ge \max \{ P^B(d,s+1,i)\mid i\le s\le n-1, (d,s+1)\in \Om \};
$$
if $(d,i+1)\in\Om$, then
$$
\beta_i\ge\max\{1, d-2i\};
$$
and if in addition $(d,i)\in\Om$, then
$$
b_i\ge 2(d-2i-1).
$$
\end{cor}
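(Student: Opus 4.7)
The plan is to deduce all three inequalities from Brylawski's Theorem \ref{thmBr} by restricting $D$ to a generic linear subspace of the right dimension and then transporting the bounds back via the combinatorial invariance of the Orlik--Solomon ring.

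First I fix $s$ with $i \le s \le n-1$ and take a generic linear subspace $S \subset \bC^n$ of dimension $s+1$. I claim that $D_{|S}$ is a central, essential, indecomposable arrangement in $S$ of degree $d$. Centrality and the preservation of the degree come from the genericity of $S$; essentiality holds because $\left(\bigcap_{H \in D} H\right) \cap S = \{0\}$ is still zero-dimensional in $S$; and indecomposability is precisely the content of Corollary \ref{corBeta}. The combinatorial invariance of the cohomology ring \cite{OS} then yields $h_j(D) = h_j(D_{|S})$, $b_j(D) = b_j(D_{|S})$, and $\beta_j(D) = \beta_j(D_{|S})$ for all $0 \le j \le s$, exactly as recorded in the discussion preceding the statement.

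Next I apply Theorem \ref{thmBr} directly to $D_{|S}$. Whenever $(d, s+1) \in \Om$ and $i \le s$, this gives $h_i = h_i(D_{|S}) \ge P^B(d, s+1, i)$, and maximising over admissible $s$ yields the first bound. Specialising to $s = i$, so that $\dim S = i+1$, gives $\beta_i = \beta_{s}(D_{|S}) \ge \max\{1,\, d+2-2(s+1)\} = \max\{1, d - 2i\}$ as soon as $(d, i+1) \in \Om$, which is the second bound. For the third, I use the identity $b_i = \beta_i + \beta_{i-1}$ recorded in the preliminaries: the case $(d,i+1) \in \Om$ already gives $\beta_i \ge d - 2i$ (the weaker estimate $\max\{1,d-2i\}\ge d-2i$ holds in every case), and an analogous application with $s = i-1$, now requiring $(d, i) \in \Om$, gives $\beta_{i-1} \ge d - 2i + 2$; adding the two yields $b_i \ge 2d - 4i + 2 \ge 2(d - 2i - 1)$.

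The only substantive step in this program is the preservation of indecomposability under a generic linear section, which is isolated as Corollary \ref{corBeta} and is where the real combinatorial content lies; everything else is a routine transport across the combinatorial invariance isomorphism followed by a direct appeal to Brylawski's bounds.
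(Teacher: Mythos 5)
Your proof is correct and follows the paper's own route exactly: restrict $D$ to a generic linear subspace of dimension $s+1$, transport $h_i$, $b_i$, $\beta_i$ across the Orlik--Solomon combinatorial invariance, use Proposition \ref{corBeta} to preserve indecomposability, and apply Brylawski's Theorem \ref{thmBr} to the restricted arrangement. Note that your derivation of the third inequality actually yields the slightly stronger bound $b_i=\beta_i+\beta_{i-1}\ge (d-2i)+(d-2i+2)=2(d-2i+1)$, which of course implies the stated $b_i\ge 2(d-2i-1)$.
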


\begin{rem} Lower bounds on $h_i=b_i+b_{i-1}$ do not automatically translate into lower bounds for $b_i$, unless the following is true: the Betti numbers of the complement of an affine arrangement of degree $d-1$ in $\bC^{n-1}$ are the first $n-1$ Betti numbers of the complement of a central affine arrangement of degree $d-1$ in some $\bC^{n'}$ with $n'\ge n$. Here $n'=n$ iff the hyperplane at infinity is in general position.  
\end{rem}

\section{Proofs}

\subsection{Lower bounds.} Let us point out how homological algebra results about linear free resolutions imply lower bounds on $h_i$, $b_i$, $\beta_i$. These bounds are in general weaker than the lower bounds of Corollary \ref{corBrbds},  as long as $d$ is not too small compared with $n$. 

\me
We start with a different proof of the following result. Note that it is well-known that the positivity of the Crapo invariant $\beta_{n-1}$ is equivalent to the indecomposability of the central arrangement.

\begin{prop}\label{corBeta} Let $D$ be a central essential indecomposable hyperplane arrangement in $\bC^n$. Then
$\beta_i  > 0 $ for $0\le i <n$.
\end{prop}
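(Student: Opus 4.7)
My plan is to extract each $\beta_i$ from the linear free resolution (\ref{eqF}). Breaking it at position $i$ yields the exact sequence
\[
0 \to F_0 \to F_1 \to \cdots \to F_i \to \Ima(\phi_i) \to 0,
\]
where $F_j = \cO_\bP(-n+1+j)\otimes H^j(U)$ has rank $b_j$. An alternating-rank count gives $\rank\,\Ima(\phi_i) = \beta_i$, so the generic rank of the sheaf map $\phi_i$ on $\bP$ is exactly $\beta_i$, and hence $\beta_i > 0$ if and only if $\phi_i$ is nonzero as a sheaf homomorphism.

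For $0 \le i \le n-2$ I would check this nonvanishing at a convenient point. At $v \in \bP(H^1(U))$ the fiber of $\phi_i$ is the cup-product map $\alpha \mapsto v\cup\alpha\colon H^i(U) \to H^{i+1}(U)$. By Orlik--Solomon, $H^*(U)$ is generated as a $\bC$-algebra by $H^1(U)$, and because $D$ is essential the top Betti number $b_{n-1}$ equals the unsigned M\"obius number of the intersection lattice of $D$ and is therefore positive. Degree-one generation then forces $H^{i+1}(U) \ne 0$ for every $i+1\le n-1$ and produces elements $v_1,\ldots,v_{i+1}\in H^1(U)$ with $v_1\cup\cdots\cup v_{i+1}\ne 0$. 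Taking $v=v_1$, the fiber $\phi_i|_{v_1}$ sends $v_2\cup\cdots\cup v_{i+1}\in H^i(U)$ to this nonzero element, so $\phi_i \ne 0$ and $\beta_i > 0$.

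The remaining case $i=n-1$ falls outside this scheme, since there is no $\phi_{n-1}$ in (\ref{eqF}) to analyze. For this I would invoke the classical equivalence recalled just before the statement: $\beta_{n-1}$ is the Crapo invariant of the matroid of $D$, and its positivity is equivalent to the matroid being connected, i.e., to the standing indecomposability hypothesis on $D$. Together with the previous paragraph this gives $\beta_i>0$ for all $0\le i<n$.

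The main obstacle will be precisely the top case. The cup-product reformulation handles every $i<n-1$ without even using indecomposability, and trying to push the same argument one step further would amount to proving that the sheaf $\cF$ (whose generic rank equals $\beta_{n-1}$) is non-torsion exactly when $D$ is indecomposable; this does not appear to follow from the resolution (\ref{eqF}) alone, so the plan ultimately falls back on the classical combinatorial fact in the top degree.
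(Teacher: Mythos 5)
Your proof is correct and follows essentially the same route as the paper: both identify $\beta_i$ with the generic rank of $\phi_i$, use degree-one generation of the Orlik--Solomon algebra to see that $\phi_i$ cannot vanish identically, and fall back on the classical fact that positivity of the Crapo invariant $\beta_{n-1}$ is equivalent to indecomposability. The only (valid) cosmetic difference is that for $i\le n-2$ you derive the nonvanishing of $\phi_i$ from essentiality via $b_{n-1}>0$, whereas the paper argues by contradiction through $H^{k}(U)=0$ for $k>i$ and hence $\beta_{n-1}=0$, contradicting indecomposability.
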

\begin{proof} We can assume $0<i<n$. Let $\cF$ and $\phi_i$ be as in (\ref{eqF}). Then $\beta_{i}=\rank\; \phi_i $. So $\beta_i\ge 0$ since the rank is a nonnegative number. If $\beta_i=0$ then $\phi_i=0$. This is a contradiction. Indeed, by the definition of $\phi_i$ in \cite{EPY}, the condition that $\phi_i=0$ implies that $H^{k}(U)=0$ for $k>i$. In particular, $\beta_{n-1}=0$ which contradicts the indecomposability of the arrangement. Note that since the entries of the matrix representing $\phi_i$ are linear forms and not all vanishing, a Nakayama Lemma argument implies that (\ref{eqF}) is the minimal locally free resolution of $\cF$, see \cite{E}-Lemma 19.4.
\end{proof}

\begin{prop}\label{propNewBounds} $\ $

(a) $b_i\ge \binom{n-1}{i}$, hence $h_i\ge\binom{n}{i}$.

(b) $\beta_i\ge n-1-i$ for $i>0$.
\end{prop}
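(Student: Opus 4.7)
The plan is to locate a Boolean sub-arrangement of $D$ and transfer its Koszul structure to $D$ via the Orlik--Solomon subalgebra inclusion. By essentiality, I can choose $n$ hyperplanes $H_1,\dots,H_n\in D$ with linearly independent defining linear forms; the sub-arrangement $D':=\{H_1,\dots,H_n\}$ is Boolean, so $U':=\bP^{n-1}-\bP(D')\cong(\bC^*)^{n-1}$, whence $H^*(U')\cong\wedge^\bullet\bC^{n-1}$ and $b_i(U')=\binom{n-1}{i}$. The open inclusion of affine complements $\bC^n-D\hookrightarrow \bC^n-D'$ realizes the standard injection of Orlik--Solomon algebras; since $D$ and $D'$ are central, the decomposition $H^*(\bC^n-D)\cong H^*(U)\otimes H^*(\bC^*)$ (and its analogue for $D'$) lets me restrict this injection to produce an injective graded ring homomorphism $\iota\colon H^*(U')\hookrightarrow H^*(U)$.

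Part (a) is then immediate: $b_i\ge b_i(U')=\binom{n-1}{i}$, so $h_i=b_i+b_{i-1}\ge\binom{n-1}{i}+\binom{n-1}{i-1}=\binom{n}{i}$ by Pascal.

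For part (b), I use that $\iota$ is a ring homomorphism, hence intertwines cup products: for every $v\in H^1(U')$ the complex $(H^\bullet(U'),v\cup\cdot)$ embeds as a subcomplex of $(H^\bullet(U),\iota(v)\cup\cdot)$. For nonzero $v\in H^1(U')\cong\bC^{n-1}$, the former is the Koszul complex on $\wedge^\bullet\bC^{n-1}$, which is exact and has degree-$i$ differential of rank exactly $\binom{n-2}{i}$. Hence $\phi_i$ has rank at least $\binom{n-2}{i}$ at the point $\iota(v)\in H^1(U)$, and since $\beta_i=\rank\,\phi_i$ is the generic (equivalently, maximum) rank of $\phi_i$ over $H^1(U)$, I conclude $\beta_i\ge\binom{n-2}{i}\ge n-1-i$ for $1\le i\le n-1$, the last being a routine binomial estimate.

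The step I expect to be the main obstacle is the explicit construction of $\iota$---verifying injectivity together with the ring-homomorphism property simultaneously---although both pieces are well documented in the Orlik--Solomon literature. After that, (a) is bookkeeping and (b) comes down to the Koszul rank identity $\rank(v\wedge\cdot\colon\wedge^i\bC^{n-1}\to\wedge^{i+1}\bC^{n-1})=\binom{n-2}{i}$ for $v\ne 0$ together with the elementary binomial inequality.
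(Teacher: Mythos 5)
Your proof is correct, but it takes a genuinely different route from the paper's. The paper obtains (a) from the Buchsbaum--Eisenbud--Horrocks rank conjecture in the case settled by Herzog--K\"uhl for graded modules with a linear minimal free resolution, applied to the resolution (\ref{eqF}) of the singular module $\cF$ (which has projective dimension $n-1$), and obtains (b) from the Evans--Griffith syzygy theorem, $\beta_i=\rank\,\phi_i$ being the rank of a syzygy module of $\cF$. You instead extract a Boolean subarrangement $D'$ of $n$ independent hyperplanes (available by essentiality), use the injectivity of $A(D')\hookrightarrow A(D)$ together with its compatibility with the splittings $H^*(\bC^n-D)\cong H^*(U)\otimes H^*(\bC^*)$ to embed $H^*(U')\cong\wedge^\bullet\,\bC^{n-1}$ as a subring of $H^*(U)$ (equivalently: the restriction map for $U\subseteq U'$ is a direct summand of an injective map, hence injective), and then read off (a) directly and (b) from the exactness of the Koszul complex plus lower semicontinuity of rank. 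The one point where you still rely on the paper's machinery is the identification $\beta_i=\rank\,\phi_i$ with the generic rank of $v\cup\cdot$, which is Proposition \ref{corBeta} and rests on \cite{EPY}; granting that, every step checks out. Your argument is more elementary --- no deep commutative algebra --- and in fact proves the stronger inequality $\beta_i\ge\binom{n-2}{i}$, which dominates $n-1-i$ for all $i\ge 1$ and is sharp for generic arrangements with $d=n+1$. What it does not do, and what the paper's proof is explicitly designed to showcase, is illustrate how general homological theorems about linear free resolutions (Herzog--K\"uhl, Evans--Griffith) apply to arrangements; so the two proofs serve different expository purposes even though yours gives a sharper conclusion in (b).
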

\begin{proof}
By the proof of Proposition \ref{corBeta}, the projective dimension of $\cF$ is $\text{pd}(\cF)=n-1$. A conjecture of Buchsbaum-Eisenbud and Horrocks about Betti numbers of minimal free resolutions implies that $b_i\ge \binom{\text{pd} (\cF)}{i}$. This conjecture is proved for graded modules with a linear minimal free resolution, which is our case, by Herzog-K\"{u}hl \cite{HK}. This is part (a).
Part (b) is the lower bound for syzygy modules of a module with projective dimension $n-1$ due to Evans-Griffith \cite{EG}.
\end{proof}

\begin{comment}
\me
Regarding the shape of the sequence of numbers $h_i$, we have the following log concavity property.

\begin{thm}\label{thmHuh}{\bf (Huh \cite{H})} Let $D$ be a central hyperplane arrangement in $\bC^n$. Then
$$
h_i^2\ge h_{i-1}h_{i+1}
$$
for $0<i<n$.
\end{thm}
\end{comment}

\subsection{Resonance, singular modules, and truncations.} Let $D$ be a central indecomposable hyperplane arrangement in $\bC^n$. Consider the locally free resolution (\ref{eqF}) of the singular module $\cF$ of $D$. Denote by $\cK_i$ the cokernel of $\phi_i$:
$$
\cO_{\bP} (-n+1+i) \otimes  H^{i}(U)\mathop{\lra}^{\phi_i} \cO_{\bP}(-n+2+i)\otimes H^{i+1}(U)\lra \cK_i\lra 0 .
$$

\begin{prop}\label{propVary} The complex (\ref{eqF}), without the last term $\cF$, is the complex of sheaves  on $\bP$ obtained from the complexes of vector spaces $(H^\bullet (U), v\cup .)$ when $v$ varies along $\bP$.
\end{prop}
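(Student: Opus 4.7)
The plan is to verify that the EPY construction of the maps $\phi_i$ literally encodes the family of cup-product complexes, by unpacking both sides in coordinates.

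First I would recall the explicit description of $\phi_i$ given in \cite{EPY}. Fix a basis $e_1,\ldots,e_d$ of $H^1(U)$ and let $x_1,\ldots,x_d$ denote the dual coordinates on $H^1(U)$, i.e.\ the standard homogeneous coordinates on $\bP$. For bases $\{a_\alpha\}$ of $H^i(U)$ and $\{b_\beta\}$ of $H^{i+1}(U)$, write the Orlik--Solomon structure constants $e_k\cup a_\alpha=\sum_\beta c^{\beta}_{k,\alpha}\,b_\beta$. The EPY differential is then the map of locally free sheaves whose matrix entry in position $(\alpha,\beta)$ is the linear form $\sum_k c^{\beta}_{k,\alpha}\,x_k$. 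Equivalently, $\phi_i$ is obtained by precomposing the constant cup-product map
$$
\cO_\bP\otimes H^1(U)\otimes H^i(U)\lra \cO_\bP\otimes H^{i+1}(U)
$$
with the tautological inclusion $\cO_\bP(-1)\hookrightarrow \cO_\bP\otimes H^1(U)$ tensored with $\text{id}_{H^i(U)}$, and then iterating/twisting so that the domain becomes $\cO_\bP(-n+1+i)\otimes H^i(U)$.

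Next I would show the complex property reduces to graded-commutativity: since $v\in H^1(U)$ has odd degree, $v\cup v=0$, which is exactly what makes the iterated cup-product maps into a differential. This is the algebraic content ensuring $\phi_{i+1}\circ\phi_i=0$, and it matches the fact that the universal construction above is built from the tautological line squared to zero inside $\Lambda^\bullet H^1(U)$.

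Then I would evaluate at a point $[v]\in\bP$ with lift $v=\sum c_k e_k$. Choosing the trivialization of the fiber of $\cO_\bP(-1)$ at $[v]$ given by $v$ itself, the matrix of $\phi_i$ specializes with $x_k\mapsto c_k$, so the fiber map $H^i(U)\to H^{i+1}(U)$ becomes $a_\alpha \mapsto \sum_{k,\beta}c^{\beta}_{k,\alpha}c_k\,b_\beta=v\cup a_\alpha$. Thus the fiber of (\ref{eqF}) (without $\cF$) at $[v]$ is canonically the complex $(H^\bullet(U),v\cup\cdot)$ up to the twisting factor inherent in choosing representatives; as $[v]$ varies we recover the family of cup-product complexes.

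The main point requiring care is keeping the tautological twists straight: the cup product with an element of $H^1(U)$ is $\bC$-linear in $v$, not projectively well-defined, so the ``family over $\bP$'' is genuinely a complex of sheaves where the twist by $\cO_\bP(-1)$ at each step absorbs the scaling ambiguity. Once this bookkeeping is done, both complexes are visibly the same. I do not expect any deeper obstacle, as the statement is essentially the geometric shadow of the BGG correspondence used by EPY.
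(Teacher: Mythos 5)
Your proposal is correct and follows essentially the same route as the paper: both identify the EPY differentials with the universal cup-product maps $1\otimes w\mapsto \sum_k x_k\otimes (e_k\cup w)$ over $S=\bC[x_1,\ldots,x_{b_1}]$ and then observe that specializing at $v\in\bP$ recovers $(H^\bullet(U),v\cup\cdot)$ (the paper simply cites \cite{EPY}, Section 3, for this identification, while you unpack it via structure constants and the tautological twist). The only slip is notational: $\dim H^1(U)=b_1=d-1$ here, so your basis of $H^1(U)$ should have $b_1$ elements rather than $d$.
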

\begin{proof} Let $S=\bC[x_1,\ldots ,x_{b_1}]$. The  complex of sheaves on $\bP$  obtained by varying $v\in \bP$ comes from the complex of free $S$-modules with maps
$$
S\otimes H^i(U)\lra S\otimes H^{i+1}(U)
$$
given by $1\otimes w\mapsto \sum x_i\otimes ( e_i\cup w)$, where $e_i$ is a basis of $H^1(U)$ and $x_i$ is the dual basis of $H^1(U)^\vee$. These are exactly the maps of the complex of $S$-modules corresponding to (\ref{eqF}) by \cite{EPY}-3.
\end{proof}

\me
Let $S$ be a generic subvector space of $\bC^n$ of dimension $s+1$. By Proposition \ref{corBeta}, the hyperplane arrangement $D_{|S}$ is also indecomposable. Since $H^i(U)=H^i(U\cap \bP(S))$ for $0\le i\le s$, the resonance varieties also agree up to degree $s$:
$$
R^i(U)=R^i(U\cap\bP(S)),\quad\quad\text{ for }0\le i\le s.
$$

The singular modules of $D$ and $D_{|S}$ are related as follows.  To denote the dependance on $D$, we will briefly use the notation $\cF_{D}$ for $\cF$.

 \begin{lem}\label{lemResTrunc} The singular module of $D_{|S}$ is
$
\cF_{D_{|S}}=\cO_{\bP}(n-1-s)\otimes \cK_{s-1}.
$
\end{lem}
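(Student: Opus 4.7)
The plan is to produce the EPY linear resolution of $\cF_{D_{|S}}$ directly, and identify it term-by-term with the twist of the initial segment of (\ref{eqF}) cut off at position $s$.

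First I would apply the EPY resolution (\ref{eqF}) to the arrangement $D_{|S}$ in $S = \bC^{s+1}$. Since $b_1(D) = b_1(D_{|S})$, the inclusion $U \cap \bP(S)\hookrightarrow U$ identifies $\bP(H^1(U \cap \bP(S)))$ canonically with $\bP$, and the resolution lives on $\bP$ in the form
\begin{equation*}
0 \to \cO_\bP(-s)\otimes H^0 \to \cO_\bP(-s+1)\otimes H^1(U\cap \bP(S)) \to \ldots \to \cO_\bP \otimes H^s(U\cap \bP(S)) \to \cF_{D_{|S}} \to 0,
\end{equation*}
whose maps, by Proposition \ref{propVary} applied to $D_{|S}$, are given by varying cup product with classes in $H^1(U\cap\bP(S))$.

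Next I would invoke the generic Lefschetz theorem for Stein affine varieties (or, combinatorially, the OS presentation): the restriction $i^*\colon H^\bullet(U)\to H^\bullet(U\cap\bP(S))$ is a ring homomorphism which is an isomorphism in degrees $< s$ and injective in degree $s$; since the combinatorial invariance of Betti numbers gives $b_s(D)=b_s(D_{|S})$, the degree-$s$ map is also an isomorphism. Consequently, the cup-product structure of $H^\bullet(U)$ in degrees $\le s$ matches that of $H^\bullet(U\cap\bP(S))$ tautologically through $i^*$.

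With this identification in place, the first $s+1$ terms of the EPY resolution for $D_{|S}$ coincide with the first $s+1$ terms of (\ref{eqF}) after twisting by $\cO_\bP(n-1-s)$: the twist turns $\cO_\bP(-n+1)$ at the leftmost position into $\cO_\bP(-s)$, and the comparison in each subsequent degree follows since both complexes are built from the same cup-product operation by Proposition \ref{propVary}. Taking cokernels on both sides, the EPY resolution of $\cF_{D_{|S}}$ ends in $\cF_{D_{|S}}$, while the twisted truncation ends in $\cO_\bP(n-1-s)\otimes \cK_{s-1}$, proving the identity.

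The main obstacle is the middle step: the combinatorial invariance statement quoted before the lemma only delivers equality of Betti numbers $b_i(D)=b_i(D_{|S})$, but what I need is the cup-product-compatible identification of the cohomology rings in degrees $\le s$. I plan to obtain this from the Zariski–Lefschetz theorem applied iteratively to the Stein complement $U$ and its generic linear sections, using that $i^*$ is automatically a ring map; a more combinatorial alternative is to observe that the OS algebra in degrees $\le s$ is intrinsically determined by the rank-$(s+1)$ truncation of the underlying matroid, which is realized by a generic section.
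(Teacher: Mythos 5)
Your proposal is correct and follows essentially the same route as the paper: identify the EPY resolution of $\cF_{D_{|S}}$ with the twisted truncation of (\ref{eqF}) using Proposition \ref{propVary} and the invariance of the cup-product maps in degrees $\le s$, then compare cokernels. The one point you flag as an obstacle --- upgrading equality of Betti numbers to a cup-product-compatible identification of $H^{\bullet}(U)$ and $H^{\bullet}(U\cap\bP(S))$ in degrees $\le s$ --- is exactly what the paper asserts without elaboration, and your Lefschetz/Orlik--Solomon justification fills it in adequately.
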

\begin{proof} The locally free resolution (\ref{eqF}) of $\cF_D$ is minimal, as in the proof of Proposition \ref{corBeta}.
Thus the complex
$$
0\ra\cO_\bP (-s)\otimes H^0(U)\ra\ldots\ra\cO_{\bP}\otimes H^s(U) ,
$$
formed from the truncation of the complex (\ref{eqF}) after twisting by $\cO_\bP (n-1-s)$, is the minimal locally free resolution of $\cO_{\bP}(n-1-s)\otimes \cK_{s-1}$. However, by Proposition \ref{propVary}, the minimal locally free resolution of $\cF_{D_{|S}}$ has the same shape due to the invariance of the cup product maps up to degree $s$. Hence $
\cF_{D_{|S}}=\cO_{\bP}(n-1-s)\otimes \cK_{s-1}.
$
\end{proof}

\subsection{The resonance varieties $R^i(U)$.} Before discussing the refined resonance varieties $R^{\; i}_j(U)$, we focus on the case $j=1$. Let $\cI(\phi)$ denote the ideal $\cI_{\beta_i}(\phi_i)$ given by the $\beta_i$-minors of $\phi_i$. We prove first a particular case of Theorem \ref{propRefRes}.

\begin{prop}\label{corFitt}
$\bP(R^{i}(U))$ is the support of the Fitting ideal $\cI(\phi_i)$.
\end{prop}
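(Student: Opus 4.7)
My approach is to translate the definition of $R^i(U)$ as a cohomological jump locus into a matrix-rank condition via the fiberwise interpretation of (\ref{eqF}). First, Proposition \ref{propVary} identifies the specialization of (\ref{eqF}) at a point $[v] \in \bP$ with the Aomoto complex $(H^\bullet(U), v \cup \cdot)$, so $[v] \in \bP(R^i(U))$ if and only if this specialized complex has nonzero $i$-th cohomology.

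Second, rank--nullity at the fiber gives
\[
\dim H^i([v]) \;=\; b_i - \rank\phi_i|_{[v]} - \rank\phi_{i-1}|_{[v]}.
\]
Since (\ref{eqF}) is exact as a sheaf complex and the generic ranks are $\rank\phi_j = \beta_j$ (from the proof of Proposition \ref{corBeta}), and since $\beta_i + \beta_{i-1} = b_i$, this rewrites as a sum of two nonnegative integers
\[
\dim H^i([v]) \;=\; \bigl(\beta_i - \rank\phi_i|_{[v]}\bigr) + \bigl(\beta_{i-1} - \rank\phi_{i-1}|_{[v]}\bigr).
\]
The containment $V(\cI_{\beta_i}(\phi_i)) \subseteq \bP(R^i(U))$ is then immediate: any rank deficit of $\phi_i$ at $[v]$ already forces cohomology to appear.

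For the reverse containment, the worry is that $[v] \in \bP(R^i(U))$ could in principle come from a rank deficit of $\phi_{i-1}$ alone, so I must show that such a case forces a matching deficit of $\phi_i$, i.e.\ $V(\cI_{\beta_{i-1}}(\phi_{i-1})) \subseteq V(\cI_{\beta_i}(\phi_i))$. The mechanism should be graded commutativity in the Orlik--Solomon algebra: given $w \in \ker\phi_{i-1}|_{[v]}$ representing a nontrivial Aomoto class (i.e.\ not of the form $v \cup u$), one has $v \cup (a \cup w) = \pm a \cup (v \cup w) = 0$ for every $a \in H^1$, producing a subspace $H^1 \cup w \subseteq \ker\phi_i|_{[v]}$ not all of which can lie in the (already shrunken) image $\Ima \phi_{i-1}|_{[v]}$. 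A dimension count should then yield $\dim\ker\phi_i|_{[v]} > \beta_{i-1}$, equivalent to $\rank\phi_i|_{[v]} < \beta_i$.

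The main obstacle is this last dimension count: isolating the contribution of $H^1 \cup w$ not already contained in the generic image $v \cup H^{i-1} = \Ima \phi_{i-1}|_{[v]}$, where care is needed because at such $[v]$ that image is itself smaller than generic. If the direct wedge-product argument becomes delicate, a cleaner alternative is to exploit the pure linear structure of the EPY resolution: (\ref{eqF}) is a minimal linear resolution of the singular module $\cF$ in the sense of \cite{EPY}, and one can deduce the required nesting of Fitting loci along consecutive maps from a Buchsbaum--Eisenbud depth/acyclicity argument applied to $\cF$, bypassing the explicit Orlik--Solomon manipulation.
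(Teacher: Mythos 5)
Your proposal is correct in outline but follows a genuinely different route from the paper, and its first-choice mechanism has a gap that your fallback fixes. The paper proves this via homological algebra: by \cite{EPY}, Theorem 4.1(a), $H^{n-1-i}(H^\bullet(U),v\cup\cdot)=\mathrm{Tor}_1^{\cO_{\bP,v}}(\kappa(v),(\cK_i)_v)$ after truncating with Lemma \ref{lemResTrunc}, so $\bP(R^i(U))$ is the non-locally-free locus of $\cK_i=\mathrm{coker}\,\phi_i$, which is the support of $\cI(\phi_i)$ by \cite{E}, Prop.\ 20.6. Your route instead works fiberwise: the rank--nullity identity $\dim H^i([v])=(\beta_i-\rank\phi_i|_{[v]})+(\beta_{i-1}-\rank\phi_{i-1}|_{[v]})$ is correct (generic exactness gives $\rank\phi_j=\beta_j$ and $\beta_i+\beta_{i-1}=b_i$), it yields $V(\cI_{\beta_i}(\phi_i))\subseteq\bP(R^i(U))$ immediately, and it correctly isolates the remaining issue as the nesting $V(\cI_{\beta_{i-1}}(\phi_{i-1}))\subseteq V(\cI_{\beta_i}(\phi_i))$. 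Your Orlik--Solomon cup-product argument for that nesting does not close as stated: knowing $H^1\cup w\subseteq\ker\phi_i|_{[v]}$ gives no control on $\dim(H^1\cup w)$ nor on its overlap with $v\cup H^{i-1}$, so the dimension count $\dim\ker\phi_i|_{[v]}>\beta_{i-1}$ does not follow. You should commit to your alternative: the complex (\ref{eqF}) is a finite free resolution over $S$, and Buchsbaum--Eisenbud theory (\cite{E}, Cor.\ 20.12) gives precisely that the supports of the ideals of maximal minors are nested along consecutive maps of a finite free resolution. With that citation your proof is complete. It is worth noting the inversion of logic relative to the paper: the paper proves the proposition first and then \emph{deduces} propagation of resonance from \cite{E}-20.12 (see the remark following the proposition), whereas your argument consumes that same nesting as an input. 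What your approach buys is elementarity on one side (no Tor, no truncation lemma, one inclusion is pure linear algebra); what the paper's buys is that the single identification with $\mathrm{Tor}_1$ handles both inclusions at once and generalizes directly to the refined loci $R^{\,i}_j$ in Theorem \ref{propRefRes}.
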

\begin{proof} Let $\cK_i:={\rm{coker}} \;\phi_i$.  We need to show that $\bP(R^{i}(U))$ is the locus of points where $\cK_i$ fails to be locally free. Indeed, by \cite{E}-20.6, the non-locally free locus of $\cK_i$ is the support of the Fitting ideal $\cI(\phi_i)$. 

 Let $v\in \bP=\bP(H^1(U))$ and let $\kappa(v)$ be the residue field of $v$. By \cite{EPY}-Theorem 4.1 (a), 
$$
H^{n-1-i}(H^\bullet(U), v\cup .)=\text{Tor} _i^{\cO_{\bP,v}}(\kappa (v),\cF_v)
$$
In particular, we have $H^{n-2}(H^\bullet(U), v\cup .)=\text{Tor} _1^{\cO_{\bP,v}}(\kappa (v),\cF_v)$. By \cite{E}-Ex. 6.2 (a), the space on the right is zero iff $\cF_v$ is free. Hence $v\in\bP (R^{n-2}(U))$ iff $\cK_{n-2}=\cF_v$ is not free. 

 To prove the claim for $i<n-2$, we reduce to the above case by truncating and using Lemma \ref{lemResTrunc}. In this case we have
\begin{align}\label{eqTruncTor}
H^i(H^\bullet (U),v\cup .)=\text{Tor}_1^{\cO_{\bP,v}}(\kappa (v),(\cK_i)_v).
\end{align}
Hence, as above, $\bP(R^{i}(U))$ is the locus of points where $\cK_i$ fails to be locally free. 
\end{proof}

\begin{rem}
Note that propagation of resonance, i.e. the fact that $R^{i}(U)\subset R^{i+1}(U)$, follows from \cite{E}-20.12 where it is shown that the support of $\cI(\phi_i)$ is included the support of $\cI(\phi_{i+1})$. 
\end{rem}

\me\ni{\it Proof of Theorem \ref{propCod}.}  Let us prove the first inequality. By \cite{E}-20.9, we have that ${\rm depth}\; \cI(\phi_i)\ge n-1-i.$ Since $S=\bC[x_1,\ldots ,x_{b_1}]$ is Cohen-Macaulay, depth equals codimension. Now the claim follows by Proposition \ref{corFitt}.  

It is known that the depth of Fitting ideals is  bounded above by that of generic determinantal varieties. More precisely, for a map $f:P\ra Q$ of projective modules,  we have by \cite{EN} that
\begin{align}\label{eqEagon}
{\rm depth}\, I_k(f) \le ({\rm rank} (P)-k+1)({\rm rank} (Q) -k+1),
\end{align}
where $I_k(f)$ is the ideal generated by the $k$-minors of the matrix representing $f$. In our case,
\begin{align*}
q_i={\rm depth}\, \cI(\phi_i) & \le (b_i-\beta_{i}+1)(b_{i+1}-\beta_{i}+1)=\\
& =(\beta_{i-1}+1)(\beta_{i+1}+1).\quad\quad
\end{align*}
The remaining inequality  $q_i\le \beta_{i+1}+i-1$ is part (d) of Theorem \ref{thmMain2-v2} below.  $\quad\Box$

\me
Theorem \ref{thmMain2}  consists of the parts (a)-(c) and (e) in:

\begin{thm}\label{thmMain2-v2} Let $0<j<n-1$. If $j=n-2$, assume that $q_{n-2}>1$. Then:

\ni (a) Any Schur polynomial of weight $<q_j$ in $c_1^{(j)},\ldots ,c_{q_j-1}^{(j)}$  is non-negative. 

\ni (b) $c_i^{(j)}=0$ if $\beta_{j+1}<i<q_j$.

\ni (c) $q_j> \max\{ i\mid c^{(j)}_i\ne 0 \}$.

\ni (d) $\beta_{j+1}\ge q_j-j-1$ .

\ni (e) The coefficients $c^{(j)}_i$ of the polynomial $c^{(j)}_t$ form a log concave sequence.

\end{thm}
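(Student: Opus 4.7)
The plan is to realize $(c_i^{(j)})$ as Chern classes of a coherent sheaf with controlled geometry. Let $S\subset\bC^n$ be a generic linear subspace of dimension $j+2$. By Lemma~\ref{lemResTrunc}, twisting the truncation of (\ref{eqF}) at position $j+1$ equips $\cF_{D_{|S}}$ with a minimal linear locally free resolution
\begin{align*}
0\to\cO_\bP(-j-1)^{b_0}\to\cO_\bP(-j)^{b_1}\to\cdots\to\cO_\bP^{b_{j+1}}\to\cF_{D_{|S}}\to 0,
\end{align*}
exhibiting $\cF_{D_{|S}}$ as a globally generated quotient of a trivial bundle on $\bP$. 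By Proposition~\ref{corFitt} applied to $D_{|S}$ together with the combinatorial invariance $R^j(U)=R^j(U\cap\bP(S))$, the non-locally-free locus of $\cF_{D_{|S}}$ coincides with $\bP(R^j(U))$, of codimension $q_j$. A Whitney multiplicative computation yields
\begin{align*}
c_t(\cF_{D_{|S}})=\prod_{\ell=1}^{j+1}(1-\ell h t)^{(-1)^\ell b_{j+1-\ell}},
\end{align*}
so that $c_i(\cF_{D_{|S}})=c_i^{(j)}\cdot h^i$ in $H^{2i}(\bP)$.

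For parts (a) and (b), I restrict to a generic linear $\bP^w\subset\bP$. When $w<q_j$, a dimension count forces $\bP^w$ into $V:=\bP\setminus\bP(R^j(U))$, so $\cF_{D_{|S}}|_{\bP^w}$ is a globally generated vector bundle of rank $\beta_{j+1}$. For (a), the Fulton--Lazarsfeld positivity theorem for nef bundles gives that every weight-$w$ Schur polynomial in its Chern classes is represented by an effective cycle, yielding $s_\lambda(c_1^{(j)},\ldots,c_w^{(j)})\ge 0$. For (b), the rank bound forces $c_i(\cF_{D_{|S}}|_{\bP^i})=0$ whenever $\beta_{j+1}<i<q_j$, hence $c_i^{(j)}=0$.

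For (d), the bound $q_j\le\beta_{j+1}+j+1$ rests on a codimension estimate for the non-locally-free locus of a sheaf with generic rank $r=\beta_{j+1}$ and projective dimension $p=j+1$, complementing the Eagon--Northcott bound used in Theorem~\ref{propCod}. Part (c), asserting $c_i^{(j)}=0$ for every $i\ge q_j$, is the central technical step. The plan, in the spirit of Popa--Lazarsfeld \cite{LP,Lo}, is to construct from the derivative complex (\ref{eqF}) an auxiliary globally generated vector bundle $\cE$ on $\bP$ of rank $\beta_{j+1}$ whose total Chern polynomial agrees with $c_t^{(j)}$; then $c_i(\cE)=0$ for $i>\beta_{j+1}$ by rank, and combined with (b) and (d) this propagates to the vanishing of $c_i^{(j)}$ for all $i\ge q_j$.

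Finally, (e)---log concavity $(c_i^{(j)})^2\ge c_{i-1}^{(j)}c_{i+1}^{(j)}$---coincides via the Jacobi--Trudi identity with the non-negativity of the Schur polynomial $s_{(2,2,\ldots,2)}(c^{(j)})$ of weight $2i$ (with $i$ twos). When $2i<q_j$, this follows directly from (a); otherwise the vanishings from (b) and (c) force enough of $c_{i-1}^{(j)},c_i^{(j)},c_{i+1}^{(j)}$ to be zero that log concavity becomes automatic. Throughout, the principal obstacle is the construction of the auxiliary bundle $\cE$ in part (c), which requires the full force of the Popa--Lazarsfeld / BGG technique; once this is in place, (c) and (e) follow from standard Chern-class positivity, while (d) is an independent consequence of the codimension bound on Fitting supports.
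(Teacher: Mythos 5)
Your treatment of (a) and (b) is correct and is essentially the paper's own argument: truncating via a generic $(j+2)$-dimensional section realizes the $c^{(j)}_i$ as Chern classes of the globally generated sheaf $\cK_j(n-j-2)$, which is locally free exactly off $\bP(R^j(U))$, and restricting to a generic linear subspace of dimension $<q_j$ (which therefore misses $\bP(R^j(U))$) reduces everything to Fulton--Lazarsfeld positivity for globally generated bundles. The remaining three parts, however, are not actually proved.

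For (c) you propose to construct a globally generated bundle $\cE$ of rank $\beta_{j+1}$ on all of $\bP$ with $c_t(\cE)=c^{(j)}_t$, and you flag this construction as the outstanding obstacle; it cannot be carried out as stated, since $\cK_j(n-j-2)$ fails to be locally free precisely along $\bP(R^j(U))$, and even if such an $\cE$ existed on $\bP=\bP^{d-2}$ its Chern classes would only control $c_i^{(j)}$ for $i\le d-2$ (the Whitney identity holds modulo $h^{d-1}$), so the rank vanishing would not kill all $c_i^{(j)}$ with $i\ge q_j$. For (d), ``a codimension estimate for the non-locally-free locus'' is not an available elementary fact; the actual input is the Evans--Griffith splitting criterion applied to the restriction $\cF'$ of $\cK_j(n-j-2)$ to a generic $\bP^{q_j-1}$: chasing the linear resolution gives vanishing of the intermediate cohomology of all twists of $\cF'$ in a range governed by $q_j-j$, so if $\rank\cF'=\beta_{j+1}$ were too small, $\cF'$ would split into line bundles, which is impossible because the alternating product of the Chern polynomials of the terms of the resolution cannot be a product of factors $(1+a_iht)$. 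This syzygy-type theorem is a genuine ingredient your sketch omits. Finally, for (e) your Jacobi--Trudi reduction of $\left(c_i^{(j)}\right)^2-c_{i-1}^{(j)}c_{i+1}^{(j)}$ to the Schur polynomial $s_{(2,\dots,2)}$ only invokes (a) when the weight $2i<q_j$, and the fallback ``(b) and (c) force enough vanishing'' fails concretely: for $i=q_j-2$ with $q_j\ge 4$ and $\beta_{j+1}\ge q_j-1$, none of $c_{i-1}^{(j)},c_i^{(j)},c_{i+1}^{(j)}$ is forced to vanish, yet $2i\ge q_j$. The intended argument is different and simpler: once (c) is known, $c^{(j)}_t$ is a polynomial satisfying $c^{(j)}_t\cdot\prod_{k\ \mathrm{odd}}(1-kt)^{b_{j+1-k}}=\prod_{k\ \mathrm{even}}(1-kt)^{b_{j+1-k}}$, hence divides a real-rooted polynomial and is itself real-rooted, and Newton's inequalities then give log concavity for all $i$ at once.
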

\begin{proof}
The proof is a combination of \cite{EPY} and \cite{LP}. Let us prove first the case when $j=n-2$. Denote $q_{n-2}$ by $q$, and $\beta_{n-1}$ by $\beta$.  

Let $W$ be a vector subspace of $H^1(U)$ that is transversal to $R^{n-2}_1(U)$. Then $\bP':=\bP(W)$ has dimension $q-1\ge 1$. Restrict the linear locally free resolution of $\cF$ to $\bP'$. Then we have a linear locally free resolution
\begin{align}\label{eqSec}
0\ra \cO_{\bP'}(-n+1)\otimes H^0(U) & \ra \cO_{\bP'}(-n+2)\otimes H^1(U)\ra\ldots \\
\notag \ & \ldots \ra \cO_{\bP'}\otimes H^{n-1}(U)\ra \cF'\ra 0 \, ,
\end{align}
where $\cF'$ is a vector bundle on $\bP'$.

It follows that $c_t^{(n-2)}$ is the Chern polynomial of $\cF'$. Since $\cF'$ is globally generated, the Chern classes $c_i(\cF')$, which equal $c_i^{(n-2)}$, and the Schur polynomials in these, are nonnegative, \cite{Fu}-12.1.7-(a). This proves (a).

As in \cite{LP}, the parts (b) and (c) follow from the fact that  $\rank (\cF')=\beta$, that $c_i(\cF')=0$ for $i>\max \{\rank (\cF'),q-1\}$, and that there exist an $i$ such that $c_i(\cF')\ne 0$.

The proof of part (d) is essentially the same as the one in \cite{LP}. Since $\beta>0$, we can assume $q>n-1$. If $q=n$ then we need to show that $\cF'\ne 0$. If $\cF'=0$ then (\ref{eqSec}) cannot be an exact sequence, as the alternating product of Chern polynomials cannot be 1. So we can assume that $q>n$. Chasing through (\ref{eqSec}) we have that $H^j(\bP',\cF'(k))=0$ for all $k$ and $0<j<q-n+1$. The splitting criterion of Evans-Griffith, see \cite{La}-3.2.12, implies, if $\rank (\cF')\le q-n+1$, that $\cF'$ splits as a direct sum of line bundles. This cannot happen for the same reason as before. Hence $\rank (\cF')> q-n+1$.

Now we prove the case $j<n-2$. Consider the complex obtained from (\ref{eqF}) by truncation:
\begin{align*}
0 & \lra \cO_\bP(-n+1)^{\oplus b_0}   \lra \cO_\bP(-n+2)^{\oplus b_1}\lra\ldots  \\
\notag\ & \ldots \lra \cO_\bP (-n+1+j)^{\oplus b_{j}} \lra \cO_\bP(-n+j+2)^{\oplus b_{j+1}}\lra \cK_j\lra 0 \, .
\end{align*}
We tensor this complex with $\cO_\bP(n-j-2)$ to obtain a linear locally free resolution 
\begin{align*}
0 &  \lra \cO_\bP(-1-j)^{\oplus b_0}    \lra \cO_\bP(-j)^{\oplus b_1}\lra\ldots  \\
\notag\ & \ldots \lra \cO_\bP (-1)^{\oplus b_{j}} \lra \cO_\bP^{\oplus b_{j+1}}\lra \cK_j(n-j-2)\lra 0 \, .
\end{align*}
Now $\cK_j(n-j-2)$ is globally generated, and the rest of the proof goes as for the case $j=n-2$.

By part (c), $c^{(j)}_t$ is a polynomial which has all coefficients and all roots real. An old theorem of Newton, see \cite{Sta}-Theorem 2, implies that the coefficients must form a log concave sequence. This shows part (e).
\end{proof}

\begin{que}\label{queCod}
Is $q_{n-2}>1$ always true ?
\end{que}
We note that this is true for $n=3$: the non-local components of $R^1(U)$ have small dimension \cite{Y}, and the local components have codimension $\ge 2$ by the indecomposability of $D$, cf. \cite{FY}. We give an algebraic reformulation of this question and a partial answer in section \ref{subsRefQ} .

\subsection{The refined resonance varieties $R^{\,i}_j(U)$.} We prove now the remaining statements from Introduction.

\me\ni{\it Proof of Theorem \ref{propRefRes}.}
By  (\ref{eqTruncTor}) and Lemma \ref{lemResTrunc}, it is enough to prove the case $i=n-2$. Hence, we need to show that the locus of points $v$ such that $\dim \text{Tor}_1^{\cO_{\bP,v}}(\kappa(v),\cF_v)\ge j$ is the support of $\cI_{\beta_{n-2}+1-j}(\phi_{n-2})$. This ideal is actually an invariant of $\cF$, by \cite{E}-Corollary/Definition 20.4. In the notation of {\it loc. cit.},
$$
\cI_{\beta_{n-2}+1-j}(\phi_{n-2})=\text{Fitt}_{\beta_{n-1}-1+j}(\cF),
$$
since $\rank\, \cF=\beta_{n-1}$. By \cite{E}-Proposition 20.6, the support of the Fitting ideal $\text{Fitt}_{\beta_{n-1}-1+j}(\cF)$ is the locus of points $v$ where $\cF_v$ cannot be generated by $\beta_{n-1}-1+j$ elements.

To summarize, it is enough to show: if $(R,P)$ is a local noetherian domain, and $M$ is an $R$-module minimally generated by $k$ elements, then 
$$
k-\rank (M)\le \dim \text{Tor}_1^R(R/P,M).
$$
Here $\rank (M)$ is the dimension of the $K$-vector space $M\otimes_R K$, where $K$ is the quotient field of $R$.

To prove this claim, consider a minimal set of generators of $M$ and the short exact sequence attached to them:
\begin{align}\label{eqShS}
0\ra N\ra R^k\ra M\ra 0 .
\end{align}
Since the operation $.\otimes_R K$ is exact, we have
$$
k-\rank(M)=\rank (N).
$$
By a similar reasoning, $\rank (N)\le k'$, where $k'$ is the minimal number of generators of $N$. Tensoring (\ref{eqShS}) with $R/P$ we have  an exact sequence of $R/P$-vector spaces
$$
0\ra \text{Tor}_1^R(R/P,M)\ra N\otimes_R R/P\ra(R/P)^k\ra M\otimes_R R/P\ra 0 .$$
By Nakayama Lemma and the minimality of $k$, the two vector spaces on the right are isomorphic. Hence, also the two vector spaces on the left are isomorphic, and $k'=\dim \text{Tor}_1^R(R/P,M)$.
$\quad\Box$

\me\ni
{\it Proof of Corollary \ref{corHigherRCodim}.}
It follows from Theorem \ref{propRefRes} and (\ref{eqEagon}).$\quad\Box$

\me\ni
{\it Proof of Corollary \ref{corConnected}.}
By a theorem of Fulton-Lazarsfeld \cite{La} -7.2, the support of the ideal sheaf $\cI_{k+1}(\phi)$ is connected if $\phi:\cE\ra\cE'$ is a vector bundle map on a variety $X$ such that $\cE^\vee\otimes\cE'$ is ample and $\dim (X)>(\rank (\cE)-k)(\rank (\cE')-k)$. We apply this to $\cI_{\beta_i+1-j}(\phi_i)$ and $X=\bP^{d-2}$. Note that 
$$
(\cO_\bP(-n+1+i)^{\oplus b_i})^\vee\otimes\cO(-n+2+i)^{\oplus b_{i+1}}=\cO_\bP(1)^{\oplus b_ib_{i+1}}
$$
is an ample vector bundle since it is the direct sum of ample line bundles.
$\quad\Box$

\me\ni
{\it Proof of Corollary \ref{corHigherResInclusions}.}
 Because of Theorem \ref{propRefRes}, we can apply to the refined resonance varieties facts about minors of matrices in a finite free resolution. For example, the second claim follows directly from Buchsbaum-Eisenbud \cite{BE}- Corollary 6.2 together with \cite{BE}-Theorem 3.1, part (b). The first claim follows from \cite{BE}-(10.5) if the Conjecture 10.1 in \cite{BE} holds. This conjecture is proven by Tchernev-Weyman \cite{TW}.
$\quad\Box$

\section{Dissection of Theorem \ref{thmMain2}}\label{secRem}

\subsection{Linear combinations.}  We note that
\begin{align*}
c^{(i)}_1 &=\beta_i-\beta_{i-1}+\beta_{i-2}-\ldots\\
& = b_i-2b_{i-1}+3b_{i-2}-\ldots\\
&=h_i-(1+2)h_{i-1}+(1+2+3)h_{i-2}-(1+2+3+4)h_{i-3}+\ldots.
\end{align*}
Since $\beta_i=c^{(i)}_1+c^{(i-1)}_1$, lower bounds on the numbers $c^{(i)}_1$ give lower bounds on $\beta_i$.

\subsection{Nonlinear combinations.} The numbers $c^{(i)}_j$ for $i\ge 2$, and the higher degree Schur polynomials, are nonlinear combinations of $h_i$, $b_i$, $\beta_i$, $c^{(i)}_1$. Denote for simplicity
$$
a_i=c^{(i)}_1.
$$
We have for example:
\begin{align*}
c^{(2)}_2\ge 0  & \iff a_2^2+2a_0-2a_1+a_2\ge 0 \\
[c^{(2)}_1]^2-c_2^{(2)} \ge 0& \iff  a_2^2-2a_0+2a_1-a_2\ge 0\\
c^{(3)}_2\ge 0 &\iff a_3^2-2a_0+2a_1-2a_2+a_3\ge 0\\
c^{(3)}_3\ge 0 &\iff a_3^2 +6(a_1a_3-a_0a_3+a_2a_3)+\\
	&\quad\quad +3a_3^2-36a_0+24a_1-12a_2+2a_3\ge 0 \\
c^{(3)}_1c^{(3)}_2-c^{(3)}_3\ge 0 & \iff a_3^3+18a_0-12a_1+6a_2-a_3\ge 0\\
[c^{(3)}_1]^3-2c^{(3)}_1c^{(3)}_2+c^{(3)}_3\ge 0 &\iff a_3^3+6(a_0a_3-a_1a_3+a_2a_3)-3a_3^2-\\
&\quad\quad-36a_0+24a_1-12a_2+2a_3\ge 0.	
\end{align*}

\subsection{Lower bounds on $b_i$.}\label{subsLower} Although we regard the positivity properties from Theorem \ref{thmMain2} on various combinations of the numbers $b_i$ as mainly telling us something about the shape of the sequence $b_i$, we can also use these inequalities to derive lower bounds for $b_i$. Let us do so for the first few ones.

\me
For $n=4$ we note that if $q_2>1$ then $c^{(2)}_1\ge 0$ gives
$$
b_2 \ge   2d-5 . \\
$$
If $q_2>2$, then solving the quadratic equation $c_2^{(2)}\ge 0$, as in \cite{LP}-3.4, we deduce the stronger inequality
\begin{equation}\label{eqb2}
b_2\ge 2d  +\frac{\sqrt{8d-31}}{2}  -\frac{11}{2} .
\end{equation}
We note that expression under the square root is always positive, since $d\ge 5$ by the irreducibility assumption on $D$. We have an equality in (\ref{eqb2}) when $c_2^{(2)}=0$, which by Theorem \ref{thmMain2}-(b) is guaranteed when $\beta_3=1$. This is the case of a generic arrangement with $d=5$.

\me
For $n=5$ we note that $c_1^{(2)}\ge 0$ gives
$$
b_2\ge 2d-5 .
$$
If $q_2>2$, then solving the quadratic equation $c_2^{(2)}\ge 0$ we deduce the stronger inequality
$$
b_2\ge 2d  +\frac{\sqrt{8d-31}}{2}  -\frac{11}{2} .
$$
Note that these are the same inequalities as in $n=4$ case. The inequality $c_1^{(3)}\ge 0$ gives, if $q_3>1$,
$$
b_3>2b_2-3d+7 .
$$
Solving for the quadratic equation $c_2^{(3)}\ge 0$ gives as in \cite{LP}, if $q_3>2$, the stronger inequality
\begin{equation}\label{eqb3}
b_3\ge \frac{13}{2} -3d +2b_2+\frac{\sqrt{73-24d+8b_2}}{2} .
\end{equation}
This is available as long as the expression under the square root is nonnegative, that is if $b_2\ge 3d-9$. We note that we get an equality in (\ref{eqb3}) if $c_2^{(3)}= 0$, which by Theorem \ref{thmMain2}-(b) is guaranteed when $\beta_4=1$. This is the case of the generic arrangement with $d=6$.

\begin{rem} From the above computations we note the following trend: the inequality $c_{i+1}^{(j)}\ge 0$ is better  than $c_i^{(j)}\ge 0$.  \end{rem}

\begin{ex} Let $D$ be a generic central hyperplane arrangement of degree $d$ in $\bC^{n}$, with $d>n$. Then
\begin{align*}
b_i & =\binom{d-1}{i}, \quad\beta_i =\binom{d-2}{i}, \quad c_1^{(i)} =\binom{d-3}{i},
\end{align*}
if $0\le i\le n-1$, otherwise all three numbers are zero. Also, $c_2^{(i)}=0$ when $d=j+3$ and $n=j+2$.
\end{ex}

\section{Remarks on  resonance varieties}

\subsection{Explicit equations for $R^1$.}\label{subsecComp}
There are well-known explicit bases for the vector spaces $H^i(U)$ such as the ``no broken circuits" sets, \cite{OT}. Hence the matrices representing $\phi_i$, and thus by Theorem \ref{propRefRes}, the equations for the resonance varieties $R^{\,i}_j(U)$ are very explicit. 

\begin{example}
Let us spell out how to obtain $R^1$ for the case when the hyperplane arrangement $D$ is the cone over a planar line arrangement $\{H_1,\ldots , H_{d}\}$ with at most triple points. Each line $H_j$ with $j\ne d$ defines a basis element of $H^1(U)$, and of its dual, i.e. a generator $x_j$ of the symmetric algebra $S=\bC[x_1,\ldots ,x_{d-1}]$. For each intersection point $P$ of two lines from $\{H_1,\ldots , H_{d-1}\}$, define 
$$i_P:=\max \{ i<d\ |\ P\in H_i\}.$$
Consider the pairs $(i,i_P)$ with $i<i_P$ and $P\in H_i$. These pairs index a basis of $H^2(U)$. The $b_2\times b_1$ matrix $M=(M_{(i,i_P),j})$ representing the map $\phi_1$ has entries
$$
M_{(i,i_P),j}=\left\{
\begin{array}{ll}
0 &\quad\text{ if }j\not\in\{i,i_P\}, \\
-x_{i_P} &\quad\text{ if }j=i,\\
x_i &\quad\text{ if }j=i_P,
\end{array}\right.
$$
if $P$ is a multiplicity-two point of $\{H_1,\ldots ,H_{d-1}\}$, and
$$
M_{(i,i_P),j}=\left\{
\begin{array}{ll}
0 &\quad\text{ if }j\not\in\{i,k,i_P\}, \\
x_i &\quad\text{ if }j=i_P,\\
-x_{i_P}-x_k &\quad\text{ if }j=i,\\
-x_{i_P}+x_i &\quad\text{ if }j=k,
\end{array}\right.
$$
if $P$ is a multiplicity-three point of $\{H_1,\ldots ,H_{d-1}\}$ and $\{H_i,H_k,H_{i_P}\}$ are the three lines passing through $P$. The equations of $R^1$ in $S$ are the $(d-2)$-minors of the matrix $M$.
\end{example}

\me
Coming back to the general case, note that $R^1(U)$ is the common zero locus of  $\binom{b_1}{\beta_1}\cdot\binom{b_2}{\beta_1}$ polynomials, the $\beta_1$-minors of $\phi_1$. Next proposition states that only $\binom{b_2}{\beta_1}$ of these polynomials are necessary. To our knowledge this is currently the best reduction of the computational complexity of $R^1(U)$.

\begin{prop}\label{propFactor} Let $1\le i\le b_1$. Then $R^1(U)$ is the common zero locus of $x_i^{-1}m$, where $m$ ranges over the maximal minors of the $b_2\times \beta_1$-matrix obtained from $\phi_1$ by removing the $i$-th column.  
\end{prop}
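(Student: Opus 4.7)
The approach is to exploit the syzygy $\phi_1 \circ \phi_0 = 0$ carried by the resolution \eqref{eqF}. After fixing bases, $\phi_1$ is represented by a $b_2 \times b_1$ matrix $M$ of linear forms with columns $C_1, \ldots, C_{b_1}$, and $\phi_0$ is multiplication by the column $(x_1, \ldots, x_{b_1})^T$ (as described in Proposition \ref{propVary}). Composition gives the column syzygy
\[
\sum_{j=1}^{b_1} x_j\, C_j \;=\; 0 \qquad \text{in } S^{b_2}.
\]
Since $b_0 = 1$ we have $\beta_1 = b_1 - 1$, and by Proposition \ref{corFitt} the variety $\bP(R^1(U))$ is cut out by the $\beta_1$-minors of $M$. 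For each $I \subset \{1, \ldots, b_2\}$ of size $\beta_1$ and each $j \in \{1,\ldots,b_1\}$, write $m_{I,j}$ for the $\beta_1$-minor of $M$ on rows $I$ omitting column $j$.

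The heart of the proof is the Plücker-type identity
\[
x_i\, m_{I,j} \;=\; (-1)^{i-j}\, x_j\, m_{I,i} \qquad \text{in } S,
\]
valid for every $I$ and every pair $i, j$. To prove it, restrict $M$ to rows $I$, obtaining a $\beta_1 \times b_1$ matrix $M_I$ for which the column syzygy persists, so $(x_1, \ldots, x_{b_1})^T \in \ker M_I$. A Laplace-expansion computation shows that the signed-minor vector $\bigl((-1)^{j-1}\, m_{I,j}\bigr)_j$ also lies in $\ker M_I$: its inner product with any row of $M_I$ equals the determinant of a $b_1 \times b_1$ matrix with a repeated row, hence vanishes. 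Wherever $M_I$ attains its maximal rank $\beta_1 = b_1 - 1$, its kernel is $1$-dimensional over the fraction field of $S$, forcing proportionality of the two kernel vectors and giving the identity on a dense open locus; since both sides are polynomials, the identity holds throughout $S$. If the rank drops everywhere, all $m_{I,j}$ vanish and the identity is trivial.

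With the identity in place, for $j \ne i$ we have $m_{I,j} = (-1)^{i-j}(x_j/x_i)\, m_{I,i}$ in the localization $S[x_i^{-1}]$, so the ideal $\cI_{\beta_1}(\phi_1)$ and the ideal $(m_{I,i})_I$ generated by the maximal minors of the column-$i$-deleted matrix become equal after inverting $x_i$. Combined with Proposition \ref{corFitt}, this says exactly that $\bP(R^1(U)) \cap D(x_i)$ is the common vanishing locus in $D(x_i)$ of the rational sections $x_i^{-1} m$, which is the content of the proposition once one reads ``common zero locus of $x_i^{-1}m$'' as vanishing of a rational function on its natural domain $D(x_i)$. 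The main technical step is verifying the Plücker identity; everything else is Fitting-ideal formalism already available from Proposition \ref{corFitt}. One subtlety worth flagging is that irreducible components of $R^1(U)$ can lie in the hyperplane $V(x_i)$ (for example, the local component at a triple point $\{H_a, H_b, H_c\}$ with $i \notin \{a, b, c\}$), so the ``common zero locus'' here refers to the variety on the chart $D(x_i)$, equivalently to the saturated ideal $(m_{I,i})_I : x_i^\infty$, and not to a naive set-theoretic equality on all of $\bP$.
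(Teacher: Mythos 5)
Your syzygy computation is correct, and it is in effect an elementary, hands-on proof of the instance of the Buchsbaum--Eisenbud first structure theorem that the paper invokes: the paper's proof cites \cite{BE}, Theorem 3.1, to factor $\bigwedge^{\beta_1}\phi_1^*$ through $\phi_0=[x_1\ldots x_{b_1}]^t$, and your identity $x_i\,m_{I,j}=(-1)^{i-j}\,x_j\,m_{I,i}$ is exactly that factorization written out for this two-step piece of the resolution; the derivation via the column syzygy and Cramer's rule is sound. The paper then finishes by quoting part (b) of the same theorem (the entries of the cofactor $A$ have the same vanishing locus as $\cI(\phi_1)$), whereas you finish by localizing at $x_i$ --- and that is where the argument falls short of the statement.

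The gap is in the last step. Read in the UFD $S$, your identity gives more than proportionality after inverting $x_i$: for $j\ne i$ it shows $x_i$ divides $x_j\,m_{I,i}$, hence $x_i\mid m_{I,i}$, so $A_I:=x_i^{-1}m_{I,i}$ is a genuine polynomial and $m_{I,j}=\pm\, x_j A_I$ for \emph{every} $j$. Since the coordinates $x_1,\ldots,x_{b_1}$ have no common zero on $\bP$, a point $v$ kills all the minors $m_{I,j}$ (i.e.\ lies in $\bP(R^1(U))$ by Proposition \ref{corFitt}) if and only if it kills all the $A_I$. This is a set-theoretic equality on all of $\bP$, which is what the proposition asserts --- and must assert, since its point is to cut the number of defining equations globally, not chart by chart. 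Your closing caveat is therefore false: a component of $R^1(U)$ contained in $V(x_i)$, such as the local component of a triple point not involving the $i$-th hyperplane, is still contained in the zero locus of the polynomials $A_I$, because each $A_I$ vanishes at a generic point of that component where some other coordinate is nonzero, hence on the whole component. The repair is small --- record the divisibility $x_i\mid m_{I,i}$, replace the localization by the observation that $V(x_1,\ldots,x_{b_1})=\emptyset$ in $\bP$, and delete the final sentence --- but as written the proof stops at a strictly weaker, chart-local statement and incorrectly claims the global statement needs reinterpretation.
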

\begin{proof}
By \cite{BE}-Theorem 3.1, the map $\bigwedge^{\beta_1}\phi_1^*$ factors through the map $\phi_0$. The map $\phi_0$ is represented by the matrix $[x_1\ldots x_{b_1}]^t$. Let $A$ be the $1\times \binom{b_2}{\beta_1}$-matrix such that $[x_1\ldots x_{b_1}]^t\cdot A$ represents the map $\bigwedge^{\beta_1}\phi_1^*$. Then the entries of $A$ are the elements $x_i^{-1}m$, with $m$ as above. By {\it loc. cit.}, part (b), the common vanishing locus of the entries of $A$ is the same as the support of the Fitting ideal $\cI(\phi_1)$. By Corollary \ref{corFitt}, the support of $\cI(\phi_1)$ is $R^1(U)$.
\end{proof}

A similar reduction of the number of necessary polynomials to define the higher resonance varieties $R^i(U)$ is available following \cite{BE}.

\subsection{Reformulation of Question \ref{queCod}.}\label{subsRefQ} Let $I=I(\phi_{n-2})\subset S$ be the homogeneous ideal of $R^{n-2}$ from Corollary \ref{corFitt}. That is $I$ is the ideal generated by the $\beta_{n-2}$-minors of $\phi_{n-2}$. Let $q=q_{n-2}$ be the codimension of $I$. The following gives an algebraic reformulation of Question \ref{queCod}.

\begin{prop}
$q>1$ iff  for every linear form $f$ and homogeneous element $m\in S$, $fm\in I$ implies $m\in I$.  
\end{prop}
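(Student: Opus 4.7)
The condition in the statement asserts that every linear form $f \in S_1$ acts as a non-zero-divisor on the graded module $S/I$; equivalently, no linear form lies in any associated prime of $S/I$. My plan exploits the fact that by Corollary~\ref{corFitt} together with the linearity of resonance from \cite{CS,CO,EPY}, $V(I) = R^{n-2}(U)$ is a union of linear subspaces, so every minimal prime of $I$ is generated by linear forms, with number of generators equal to the codimension of the corresponding component.

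For the direction $(\Leftarrow)$, I will argue by contrapositive. If $q = 1$, some irreducible component of $R^{n-2}(U)$ has codimension one, hence is a hyperplane $V(f_0)$ for a linear form $f_0$, and the principal ideal $(f_0)$ is a minimal prime of $I$. In particular $f_0 \in \sqrt{I}$; choosing the smallest $k \geq 1$ with $f_0^k \in I$ and setting $m := f_0^{k-1}$ (or $m = 1$ when $k = 1$), the pair $(f, m) = (f_0, m)$ satisfies $fm = f_0^k \in I$ while $m \notin I$ by minimality of $k$, so the stated condition fails.

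For $(\Rightarrow)$, suppose $q > 1$. Every minimal prime of $I$ has height at least $q \geq 2$ and is generated by at least two linearly independent linear forms. The plan is to show that no linear form is a zero-divisor on $S/I$ by combining the grade bound $\mathrm{depth}(I) \geq 1$ from \cite{E}-20.9 with the minimality of the linear resolution (\ref{eqF}); these force every associated prime of $S/I$ to be generated by linear forms of height $\geq q \geq 2$, which precludes a single linear form from lying alone in any such prime.

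The main obstacle will be the $(\Rightarrow)$ direction, where I must control the \emph{embedded} associated primes of the Fitting ideal rather than only its minimal primes. I expect this to require either the double-Ext spectral sequence alluded to in the Introduction in connection with \cite{DS}, or a linkage-type argument exploiting the self-duality of the linear strand of (\ref{eqF}), to rule out embedded primes contributing individual linear forms beyond those already coming from the minimal components.
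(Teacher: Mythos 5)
There are two genuine gaps. First, in your $(\Leftarrow)$ direction the claim $f_0\in\sqrt{I}$ is false whenever $R^{n-2}(U)$ has more than one irreducible component: $\sqrt{I}$ is the intersection of \emph{all} minimal primes, and $f_0$ vanishes only on the component $V(f_0)$, not on the others. Consequently no power of $f_0$ need lie in $I$, and your witness $m=f_0^{k-1}$ does not exist in general. The repair is what the paper does: since $(f_0)$ is a minimal, hence associated, prime of $I$, it is the annihilator of some nonzero class $m+I$ in $S/I$, and by \cite{E}-3.12 such an $m$ can be taken homogeneous; then $f_0m\in I$ and $m\notin I$. (Note also that your observation that a codimension-one component forces the minimal prime to be $(f_0)$ with $f_0$ linear is exactly the point where the linearity of resonance components is needed; for a general homogeneous ideal a height-one homogeneous prime is only principal, generated by an irreducible form of arbitrary degree, and the paper's proof uses the same fact.)

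Second, your $(\Rightarrow)$ direction is a plan rather than a proof, and the plan does not engage the actual difficulty. Knowing that every associated prime of $S/I$ has height $\geq 2$ and is generated by linear forms does not ``preclude a single linear form from lying in such a prime'': the prime $(x,y)$ has height two and contains the linear form $x$, and any linear form lying in any associated prime is a zero-divisor on $S/I$, which is precisely what the stated condition forbids. So the height bound by itself proves nothing here, and the appeal to the double-Ext spectral sequence or to linkage is never carried out. For comparison, the paper's entire proof is a few lines: $q>1$ iff every associated prime of $I$ has codimension $>1$; associated primes of the homogeneous ideal $I$ are homogeneous and annihilate homogeneous classes; and a codimension-one such prime is $(f)$ with $f$ linear. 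In effect the paper reads the failure of the condition as producing an associated prime \emph{equal to} $(f)$, i.e.\ $\mathrm{Ann}_S(m+I)=(f)$, whereas from $fm\in I$, $m\notin I$ one only gets $f\in\mathrm{Ann}_S(m+I)$. Your instinct that the forward implication is the delicate one is therefore well placed, but to close it you must either show that no associated prime (minimal or embedded) of codimension $\geq 2$ contains a linear form in this situation, or justify the identification of the condition with ``no annihilator of a homogeneous class is generated by a single linear form''; as written, neither is done.
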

\begin{proof}
We have that $q>1$ iff every associated prime ideal $P$ of $I$ has codimension $>1$. Since $I$ is homogeneous, every associated prime ideals of $I$ is also homogeneous. By definition, $P$ is an associated prime ideal of $I$ if it annihilates a nonzero element $m+I$ of $S/I$. This is equivalent to annihilating a homogeneous element, \cite{E}-3.12. Since $S$ is a polynomial ring, a homogeneous prime ideal $P$ has codimension 1 iff $P$ is generated by a linear form $f$.
\end{proof}

We note that Question \ref{queCod} has a positive answer if $D$ is itself the generic central hyperplane section of an indecomposable central essential hyperplane arrangement in a higher number of variables, by Lemma \ref{lemResTrunc} and the first inequality of Theorem \ref{propCod},

\subsection{Structure of $R^{\,i}_j$.} It is known that the refined resonance varieties $R^{\,i}_j(U)$ are supported on finitely many linear subspaces: \cite{CO,CS} using Hodge theory, \cite{L-first} using  deformations, \cite{LY} using linear algebra. Let us mention a few facts that follow from Theorem \ref{propRefRes}. 

\me
 The refined resonance varieties $R^{\,i}_j$ are finite intersections of varieties swept by vector spaces of particular type. More precisely, for a $p\times q$ matrix of linear forms with $p\le q$, the support of the locus defined by the vanishing of the $p$-minors is swept by the linear spaces determined by the simultaneous vanishing of the coefficients of a linear combination of rows (or of columns), \cite{E}-Exercise A2.19.

\me
On a different note, let us see next what it means to prove, using only linear algebra, that $R^{1}_j$ are supported on linear subspaces. Fix a basis $e_1,\ldots ,e_p$ of $\bC^p$. Let $M(x)$ be a matrix of size $q\times p$, $p\le q$, with entries linear in $\bC[x_1,\ldots ,x_{p}]=\bC[x]$, such that $M(\bfa)\bfb=-M(\bfa)\bfb$ for any complex vectors $\bfa$ and $\bfb$ of size $p$, and $\rank (M(\bfa))=p-1$ for generic vectors $\bfa$.  Define $R^{1}_j$ to be the common zero locus of the $(p-j)$-minors of $M(x)$. For example, if $M(x)$ is the matrix representing $\phi_1$ in (\ref{eqF}),  then $R^1_j$ is the refined resonance variety by Theorem \ref{propRefRes}. 

\begin{lem}
Let $\bfa\in R^1_j-R^1_{j+1}$. Let $N$ be a submatrix of $M(\bfa)$ of size $(p-j-1)\times p$ that contains a $(p-j-1)\times (p-j-1)$ submatrix $N_0$ with nonzero determinant. Let $\bfb_1,\ldots, \bfb_{j+1}$ be the vectors obtained as the $(p-j)$-minors, containing $N_0$, of the $(p-j)\times p$ matrix obtained from concatenating vertically the matrix $(e_1 \ldots e_p)$ with $N$. Then $\ker M(\bfa)$ is a dimension $j+1$ linear subspace with basis $\bfb_1,\ldots, \bfb_{j+1}$.
\end{lem}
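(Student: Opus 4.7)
The plan is to first pin down the dimension of the kernel: the hypothesis $\bfa \in R^1_j \setminus R^1_{j+1}$ means every $(p-j)$-minor of $M(\bfa)$ vanishes while some $(p-j-1)$-minor does not, so $\rank M(\bfa) = p-j-1$ and $\ker M(\bfa)$ has dimension exactly $j+1$. Thus it suffices to exhibit $j+1$ linearly independent vectors in this kernel, and these will be the $\bfb_i$.

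For the main step I would show each $\bfb_i$ lies in $\ker M(\bfa)$. Let $C \subset \{1,\dots,p\}$ be the set of column indices of $N_0$, and let $\ell_1, \dots, \ell_{j+1}$ be the remaining indices; then $\bfb_i$ is the $(p-j)$-minor of the augmented matrix (top row $(e_1, \dots, e_p)$, bottom rows $N$) using columns $C \cup \{\ell_i\}$, which by Laplace expansion along the top row is a linear combination of $e_{\ell_i}$ and the $e_c$ for $c \in C$. The key identity is a Cramer-style cofactor relation: for any row $r$ of $M(\bfa)$, form the $(p-j) \times p$ matrix $\widetilde N_r$ by placing $r$ on top of $N$. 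Since the invertibility of $N_0$ forces the $p-j-1$ rows of $N$ to be linearly independent, they span the row space of the rank-$(p-j-1)$ matrix $M(\bfa)$, so $r$ lies in that span and $\widetilde N_r$ also has rank at most $p-j-1$. Hence every $(p-j)\times(p-j)$ minor of $\widetilde N_r$ vanishes; expanding the minor on columns $C \cup \{\ell_i\}$ along its first row $r$ and matching with the defining expansion of $\bfb_i$ yields exactly the inner product $r \cdot \bfb_i = 0$. Running this over all rows of $M(\bfa)$ produces $M(\bfa) \bfb_i = 0$.

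For linear independence I would read off the coordinates of $\bfb_i$ in the directions $e_{\ell_1}, \dots, e_{\ell_{j+1}}$. By the Laplace expansion defining $\bfb_i$, the coefficient of $e_{\ell_i}$ is $\pm \det N_0 \ne 0$, while the coefficient of $e_{\ell_k}$ for $k \ne i$ is $0$ because column $\ell_k$ is simply not among the columns chosen to form $\bfb_i$. Consequently the matrix of $(\bfb_1, \dots, \bfb_{j+1})$ in these $j+1$ coordinates is diagonal with nonzero diagonal entries, so the $\bfb_i$ are independent. Having $j+1 = \dim \ker M(\bfa)$ independent vectors in $\ker M(\bfa)$ finishes the proof.

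The main obstacle is bookkeeping with signs: one must verify that the Laplace expansion of the minor of $\widetilde N_r$ equals $r \cdot \bfb_i$ on the nose, not merely up to a column-dependent scalar, so that the vanishing really translates into $\bfb_i \in \ker M(\bfa)$. This is essentially the classical adjugate identity $M \cdot \mathrm{adj}(M) = \det(M)\, I$ applied to each $(p-j)\times(p-j)$ submatrix individually, so the combinatorics of signs is forced and the argument should go through without surprises.
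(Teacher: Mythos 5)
Your proof is correct and follows essentially the same route as the paper's: linear independence via the diagonal pattern of the $e_{\ell_k}$-coordinates (with $\pm\det N_0$ on the diagonal), membership in the kernel by identifying each entry of $M(\bfa)\bfb_i$ with a vanishing $(p-j)$-minor, and a dimension count from $\rank M(\bfa)=p-j-1$. The only cosmetic difference is that you handle all rows $r$ uniformly by noting they lie in the row span of $N$, whereas the paper splits into the repeated-row case and the genuine-minor case; both are fine.
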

\begin{proof}
We prove first that  $\bfb_1,\ldots, \bfb_{j+1}$ are nonzero and linearly independent. Let $k\in\{1,\ldots j+1\}$. Since the minor corresponding to $N_0$ is nonzero, there exists a coordinate $e_{i_k}$ with nonzero entry in $\bfb_k$. The set $\{i_{k'}\mid k'\in\{1,\ldots j+1\}\}$ corresponds to the columns of $N$ not contributing to the minor $N_0$. Hence the $i_{k'}$-th coordinate of $\bfb_k$, with $k'\ne k$, is zero. This shows that $\bfb_1,\ldots, \bfb_{j+1}$ are linearly independent.

 Since $\bfa\in R^1_j-R^1_{j+1}$, the codimension of $\ker M(\bfa)$ is $p-j-1$. Thus, we only need to show now that $M(\bfa)\bfb_k=0$ for all $k\in\{1,\ldots j+1\}$. Consider the $i$-th entry $(M(\bfa)\bfb_k)_{i}$ of $M(\bfa)\bfb_k$. This entry is, by the definition of $\bfb_k$, either a $(p-j)$-minor of $M(\bfa)$, or the determinant of the vertical concatenation of a $(p-j-1)\times (p-j)$ submatrix  of $N$ containing $N_0$ with one of its rows. While in the second case $(M(\bfa)\bfb_k)_{i}$ vanishes because of  the repeated row, in the first case it vanishes because the rank of $M(\bfa)$ is $p-j-1$.
\end{proof}

%For example, if $\bfa\in R^1_1-R^1_{2}$, then $\ker M(\bfa)$ is two-dimensional spanned by $\bfb_1$ and $\bfb_2$ as above. Since $\bfa\in \ker M(\bfa)$, it follows that $M(\bfb)\bfb'=0$ for all $\bfb$ and $\bfb'$ in $\ker M(\bfa)$.

Given the explicit description of generators of the spaces $\ker M(\bfa)$, one would like then to prove, via linear algebra, that there are only finitely many such $\ker M(\bfa)$ for $\bfa\in R^1_j-R^1_{j+1}$, and they form the support of $R^1_j-R^1_{j+1}$. This is done for hyperplane arrangements in \cite{LY}-Corollary 3.7. Outside this case, it is not clear how to characterize the class of matrices $M(x)$ having these properties.

\subsection{Other possible bounds.}
As in \cite{LP}, one can try to obtain bounds on Betti numbers of $U$ by displaying certain subspaces of $H^i(U)$ and counting their dimension. Note that $\Lam^i H^1(U)\ra H^i(U)$ is not injective on  decomposable forms, as it is known that there are monomials that  vanish in the Orlik-Solomon algebra. However, we can ask the following. For each $0\le i<n-1$ let $W_i$ be a vector subspace of $H^1(U)$ transversal to $R^{i}(U)$, such that $H^1(U)=W_0\supset \ldots \supset W_{n-2}$.  Define for $0<i<n$
\begin{align*}
\Sigma_i:  =\{ v_1\wedge \ldots \wedge v_i \in \Lam ^iH^1(U)\ | & \ \dim Span(v_1,\ldots ,v_i)=i\\ & \quad\text{ and } v_j\in W_{j-1}   \}.
\end{align*}

\begin{que}\label{queInj} Is the natural map $\Sigma_i \ra H^i(U)$  an injection ?
\end{que}

We note that this is true for $i=2$ and that a converse holds, in a certain sense,  see \cite{Fa} -2.13 and 3.1.
The numerical counterpart of this is:

\begin{prop}\label{propNum} If Question \ref{queInj} is true then, for $0<i<n$,
$$q_{0}+\ldots +q_{i-1} < b_i + \frac{i(i+1)}{2} .$$
\end{prop}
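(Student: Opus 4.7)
The plan is to derive the inequality from a dimension count on $\Sigma_i$, regarded as a constructible subset of $\Lambda^i H^1(U)$.

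First, I would realize $\Sigma_i$ as the image of the wedge morphism
$$
\mu\colon W_0\times W_1\times\cdots\times W_{i-1}\longrightarrow \Lambda^i H^1(U),\qquad (v_1,\ldots,v_i)\mapsto v_1\wedge\cdots\wedge v_i,
$$
restricted to the open locus where the $v_j$ are linearly independent. The source has dimension $\sum_{j=0}^{i-1}q_j$, so the task reduces to identifying the generic fiber of $\mu$.

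The group $G$ of $i\times i$ upper triangular matrices of determinant $1$, of dimension $\binom{i+1}{2}-1$, acts on the source by $(A\cdot v)_j=\sum_{k\geq j}A_{jk}v_k$. This is well-defined because the flag is decreasing ($v_k\in W_{k-1}\subseteq W_{j-1}$ for $k\geq j$), and $\mu$ is $G$-invariant since $\mu(A\cdot v)=(\det A)\,\mu(v)=\mu(v)$. The action is free on the linearly independent locus. For a generic tuple $(v_1,\ldots,v_i)$ I would show that the fiber of $\mu$ is precisely the $G$-orbit through it: if $\mu(v')=\mu(v)$, then $\mathrm{Span}(v_j')=V:=\mathrm{Span}(v_j)$ and $v_j'\in V\cap W_{j-1}$; in sufficiently generic position $V\cap W_{j-1}$ has the expected dimension $i-j+1$ and hence equals $\mathrm{Span}(v_j,\ldots,v_i)$, which forces the change-of-basis matrix to lie in $G$. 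This yields
$$
\dim\Sigma_i \;=\; \sum_{j=0}^{i-1}q_j \;-\; \binom{i+1}{2}+1.
$$

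If the natural map $\Sigma_i\to H^i(U)$ is injective on points, then its image in $H^i(U)$ has the same dimension as $\Sigma_i$, so $\dim\Sigma_i\leq b_i$. Combining the two facts gives
$$
\sum_{j=0}^{i-1}q_j \;\leq\; b_i+\binom{i+1}{2}-1 \;<\; b_i+\frac{i(i+1)}{2},
$$
which is the desired strict inequality.

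The main obstacle will be justifying the generic fiber identification $V\cap W_{j-1}=\mathrm{Span}(v_j,\ldots,v_i)$. Equivalently, one needs that for generic $v_k\in W_{k-1}$ with $k<j$, the classes $[v_k]$ are linearly independent in $W_0/W_{j-1}$. This can be arranged by choosing the flag $W_\bullet$ sufficiently generically among the transversals to the resonance varieties, and then proceeding inductively on $k$: at each step the quotient $W_{k-1}/W_{j-1}$ of dimension $q_{k-1}-q_{j-1}$ must not be contained in the span of the previously chosen classes, which one verifies by a standard dimension argument.
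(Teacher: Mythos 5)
Your argument is correct and reaches exactly the dimension count the paper needs, but by a genuinely different route. The paper identifies $\bP(\Sigma_i)$ with the Schubert variety $\{L\in G(i,b_1)\mid \dim(L\cap W_j)\ge i-j\}$ and quotes the standard formula for its dimension; you instead parametrize $\Sigma_i$ as the image of the wedge map on $W_0\times\cdots\times W_{i-1}$ and compute the generic fiber as a free orbit of the determinant-one upper-triangular group. Both give $\dim\Sigma_i=\sum_{j<i}q_j-\tfrac{i(i+1)}{2}+1$ for the affine cone, and your handling of the final step is actually cleaner than the paper's: you keep track of the $+1$ from the cone over the Grassmannian, which is what turns $\dim\Sigma_i\le b_i$ into the \emph{strict} inequality of the statement (the paper elides this by writing the projective dimension and saying ``the claim now follows''). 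What your approach buys is a completely elementary fiber-dimension argument in place of Schubert calculus; what the paper's buys is that the non-degeneracy issue is hidden inside a citation.

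That issue --- the one you correctly single out as the main obstacle --- is real, but your proposed fix does not work in general, and it is worth saying precisely why. You need, for each $j$, linearly independent classes $[v_1],\ldots,[v_{j-1}]$ with $[v_k]\in W_{k-1}/W_{j-1}$; since the flag is decreasing, this is possible if and only if $q_{k-1}-q_{j-1}\ge j-k$ for all $k<j$, i.e.\ iff $q_0>q_1>\cdots>q_{i-1}$ strictly. No genericity of the flag can help when some $q_{k-1}=q_{j-1}$: transversality forces $\dim W_m=q_m$, so $W_{k-1}\supseteq W_{j-1}$ of equal dimension means $W_{k-1}=W_{j-1}$ and the relevant quotient is zero. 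This degeneration does occur --- e.g.\ for a generic arrangement one has $R^1(U)=\{0\}$, hence $q_0=q_1=b_1$ and $W_0=W_1$ --- and then the generic fiber is strictly larger than the $G$-orbit, so $\dim\Sigma_i$ drops below $\sum q_j-\tfrac{i(i+1)}{2}+1$ and the inequality no longer follows from $\dim\Sigma_i\le b_i$. The paper's proof has exactly the same unstated hypothesis (the Schubert dimension formula it invokes is valid only for a strictly decreasing flag), so this is a defect you share with, and in effect inherited from, the source rather than a gap specific to your argument; but if you want a complete proof you must either assume $q_0>\cdots>q_{i-1}$ or rework the dimension count for the actual (possibly smaller) Schubert variety.
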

\begin{proof}
 Consider the Grassmanian $G(i,b_1)$ of dimension $i$ subspaces of $H^1(U)$, with  the Pl\"ucker embedding in $\bP$. Then $\bP(\Sigma_i)$ is the subvariety of  $G(i,b_1)$ consisting of subspaces  $L$ such that $\dim (L\cap W_j)\ge i-j$ for $0\le j\le i-1$. Noting that $\dim W_j=q_{j}$, the codimension of this Schubert variety in $G(i,b_1)$ can be computed by a standard formula, see for example \cite{GH}, and equals 
$$\codim \Sigma_i =(b_1-i)i +\frac{i(i+1)}{2}- (q_{0}+\ldots +q_{i-1}).$$
Since the dimension of $G(i,b_1)$ is $i(b_1-1)$, we have
$$
\dim \Sigma_i = (q_{0}+\ldots +q_{i-1}) - \frac{i(i+1)}{2} .
$$
The claim now follows from a positive answer to Question \ref{queInj}. 
\end{proof}

\newpage

\begin{center}
{\bf ERRATUM}
\end{center}

The proof of \cite[Theorem 1.1, p. 867\footnote{corresponding to p.11 in this arxiv version.}]{B} is incomplete: for the claimed equality of two sets, only one of the inclusions is proved. It was assumed wrongly that the other inclusion is easy. In fact, the other inclusion holds only under an additional hypothesis. We correct here Theorem 1.1 and Corollary 1.2 of \cite{B}. The rest of the results of \cite{B} stay the same, including the main results of the paper on deeper propagation and dimensional bounds for resonance varieties, and the positivity results on Betti numbers.  

\medskip
\noindent
{\bf Theorem 1.1.} Let $i\le n-2$. {\it (a) $\bP(R^i_1(U))$ is the support of the ideal $\cI_{\beta_i}(\phi_i)$. (b) $\bP(R^i_j(U))$ contains the support of the ideal $\cI_{\beta_i+1-j}(\phi_i)$, and equals it away from $\bP(R^{i-1}_1(U))$. (c) $\bP(R^1_j(U))$ is the support of the ideal $\cI_{\beta_1+1-j}(\phi_1)$. (d) $\bP(R^i_2(U))$ is the support of the ideal $\cI_{\beta_2+1-j}(\phi_2)$. (e) $\bP(R^i_j(U))$ is the support of the ideal $\cI_{\beta_i+1-j}(\phi_i)$ if $j\le 1+(n-3)/i$. (f) $\bP(R^i_j(U))$ is the support of the ideal $\cI_{b_i+1-j}(\phi_{i-1}\oplus \phi_i)$.}

\begin{proof} Part (a) is \cite[Proposition 3.4]{B}. The first claim of (b) is what is actually proven on p. 867 of \cite{B}. Indeed, it follows from: if $(R,P)$ is a local noetherian domain, and $M$ is an $R$-module minimally generated by $k$ or more elements, then $k-\rank(M)\le \dim\Tor _1^R(R/P,M)$. The proof of this statement involves the obvious fact that $\rank(N)\le k'$ where $N$ are the first syzygies of $M$ and $k'$ is the minimal number of generators of $N$ (note that there is a typo in {\it loc. cit.}). The converse of the statement holds if and only if $N$ is free. Since $N$ and $M$ are the stalks of $\coker(\phi_{i-1})$ and $\coker (\phi_i)$ in our case, by part (a) this gives the second claim in (b). Since $\bP(R^0_1(U))=\emptyset$, (c) follows. \cite[Corollary 1.3]{B} holds as stated due to the right inclusion in (b) and it follows from the same statements about the corresponding determinantal ideals. Thus $\bP(R^{i-1}_1(U))$ is a subset of the support of $\cI_{\beta_2+1-j}(\phi_2)$, and (d) follows by (b). Similarly, (e) follows from the second part of \cite[Corollary 1.3]{B} for determinantal ideals. Part (f) is well-known.
\end{proof}

We do not know if the original statement of Theorem 1.1, that $\bP(R^i_j(U))$ is the support of $\cI_{\beta_i+1-j}(\phi_i)$, holds in general. Due to the right inclusion of part (b) in Theorem 1.1 above, this change affects in \cite{B} only the statement of Corollary 1.2: 

\medskip
\noindent 
{\bf Corollary 1.2.} {\it Let $i\le n-2$. If $(\beta_{i-1}+j)(\beta_{i+1}+j)<d-2$, then $\bP(R^i_j(U))$ is connected whenever it is the support of $\cI_{\beta_i+1-j}(\phi_i)$, or more generally, $\bP(R^i_j(U))$ is connected away from the components of $\bP(R^{i-1}_1(U))$ which are disconnected from the support of  $\cI_{\beta_i+1-j}(\phi_i)$.}

\medskip

 I would like to thank Botong Wang for very useful discussions.

\end{document}